\date{September, 2017}

\documentclass[12pt]{amsart}
\usepackage{latexsym,amsmath,amsfonts,amscd,amssymb}
\usepackage{graphics}
\textwidth 6in \oddsidemargin.2in \evensidemargin.2in
\parskip.2cm
\textheight20cm
\baselineskip.6cm

\newtheorem{theorem}{Theorem}
\newtheorem{theorem*}{Theorem*}
\newtheorem{lemma}[theorem]{Lemma}
\newtheorem{lemma*}[theorem*]{Lemma}
\newtheorem{proposition}[theorem]{Proposition}
\newtheorem{proposition*}[theorem*]{Proposition}
\newtheorem{corollary}[theorem]{Corollary}
\newtheorem{corollary*}[theorem*]{Corollary}

\newtheorem{definition*}[theorem*]{Definition}

\theoremstyle{remark}


\newcommand{\cD}{{\mathcal D}}

\newcommand{\cO}{{\mathcal O}}

\newcommand{\CC}{{\mathbb C}}
\newcommand{\DD}{{\mathbb D}}

\newcommand{\QQ}{{\mathbb Q}}
\newcommand{\RR}{{\mathbb R}}
\newcommand{\TT}{{\mathbb T}}
\renewcommand{\SS}{{\mathbb S}}
\newcommand{\ZZ}{{\mathbb Z}}

\renewcommand{\a}{\alpha}

\newcommand{\eps}{\epsilon}

\renewcommand{\o}{\omega}

\newcommand{\D}{\Delta}

\title{On quasi-invariant curves}

\subjclass[2010]{Primary: 37 F 50, 37 F 25.}
\keywords{Complex dynamics, 
indifferent fixed points, hedgehogs, analytic circle 
diffeomorphisms, small divisors,  
centralizers, renormalization.}

\author[R. P\'{e}rez-Marco]{Ricardo P\'{e}rez-Marco}
\address{CNRS, IMJ-PRG, Paris 7, Bo\^\i te courrier 7012, 75005 Paris Cedex 13, France}
\email{ricardo.perez.marco@gmail.com}

\thanks{.}

\begin{document}

\begin{abstract}
Quasi-invariant curves are used in the study of hedgehog dynamics. 
Denjoy-Yoccoz lemma is the preliminary step for Yoccoz's complex renormalization techniques for the study of linearization of 
analytic circle diffeomorphisms. 
We give a geometric interpretation of Denjoy-Yoccoz lemma using the hyperbolic metric 
that gives a direct construction of quasi-invariant curves without renormalization. 
\end{abstract}

\maketitle

\section{Introduction.}

Yoccoz's approach to linearization of analytic circle diffeomorphisms (\cite{Yo3}, \cite{Yo4}) is 
based on complex sectorial renormalizations. 
These techniques were first used in his celebrated proof of the optimality 
of the Brjuno condition (\cite{PM}, \cite{Yo2}).

\medskip

The sectorial renormalization construction needs enough 
space around the circle, or, in other words, to have an analytic circle diffeomorphism that extends to an annulus of large modulus. 
For this, one needs to get first a good control on the real estimates on the Schwarzian derivative and non-linearity for 
high iterates of smooth circle diffeomorphisms that were developed by M. Herman \cite{He} and J--Ch. Yoccoz \cite{Yo1}. 
The estimates on the non-linearity, allows to control long orbits outside the circle using a Denjoy type lemma. 
This Denjoy-Yoccoz lemma is Proposition 4.4 in section 4.4 of \cite{Yo4}.

\medskip

The first application of Denjoy-Yoccoz lemma is to carry-out a sectorial renormalization in order to obtain an analytic circle diffeomorphism
which extends in a large annulus containing the circle (section 3.6 of \cite{Yo4}). The analysis of the linearization problem 
proceeds by successive renormalizations of two types. 
We have to distinguish when the rotation number is small or large compared to the inverse of the modulus of the annulus. In the first situation
with a small rotation number, the lemma is fundamental.

\medskip

\textbf{Non-linearizable dynamics.}
Sectorial renormalizations are useful in the non-linearizable situation. They were used by the author to 
study the dynamics of hedgehogs. Hedgehogs associated to an indifferent irrational non-linearizable fixed point are full non-trivial compact 
connected sets totally invariant by the dynamics. Hedgehogs were discovered by the author in \cite{PM1}. They are similar to Birkhoff topological 
invariant compact connected sets associated to Lyapunov unstable fixed points (see \cite{B}), but they are totally invariant in the holomorphic 
situation for indifferent fixed points.  
The topology of hedgehogs is 
always involved and not completely elucidated (see   \cite{B2}, \cite{B1}, \cite{PM2}). Despite this, the dynamics on the hedgehog
can be analyzed and exhibits remarkable rigidity properties. In some heuristic sense, the restriction of the dynamics to the hedgehog behaves 
as a complex automorphism of the disk with a fixed point, that is, as a rigid rotation. So, for example, the $q_n$ iterates of the dynamics, $(q_n)$ being the 
sequence of denominators of the convergents of the rotation number $\alpha$, converge uniformly
to the identity. Thus the dynamics is uniformly recurrent. 

\medskip

\begin{figure}[ht]
\centering
\resizebox{6cm}{!}{\includegraphics{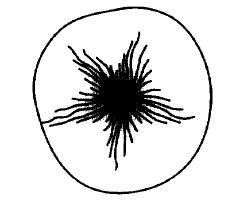}}    
\caption{A hedgehog and its defining neighborhood.}
\end{figure}

\medskip

Hedgehogs and their dynamics are the main ingredient for the analysis of the general non-linearizable dynamics. For example, we have
the following Theorem for which no proof is known without using hedgehogs.

\begin{theorem}[\cite{PM0}, \cite{PM5}]
There is no orbit converging by positive 
or negative iteration to an indifferent irrational fixed point 
of an holomorphic map and distinct from the fixed point.
\end{theorem}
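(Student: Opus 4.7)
The plan is to work locally near the indifferent fixed point, split into the linearizable and non-linearizable cases, and reduce the backward-iteration statement to the forward one via the local inverse of $f$. Normalize so that $f(0)=0$ and $f'(0)=e^{2\pi i\alpha}$, $\alpha\in\RR\setminus\QQ$; since $f'(0)\ne 0$, $f$ has a local holomorphic inverse at $0$ which is again an indifferent irrational germ (rotation number $-\alpha$), so it suffices to rule out forward orbits $(z_n)_{n\ge 0}$ with $z_n\to 0$ and $z_0\ne 0$. In the linearizable case the conclusion is immediate: a local biholomorphism $h$ fixing $0$ conjugates $f$ to the rotation $z\mapsto e^{2\pi i\alpha}z$ on a neighborhood of $0$, whence $|h(f^n(z_0))|=|h(z_0)|>0$ is constant and forbids $f^n(z_0)\to 0$.

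The content of the theorem lies in the non-linearizable case, where I would invoke hedgehogs. Fix a small Jordan neighborhood $U$ of $0$ on which $f$ is univalent, and let $K=K_U$ be the associated hedgehog: the connected component of $0$ in the compact set of points whose forward orbit stays in $\overline{U}$. The relevant properties from the hedgehog theory of \cite{PM1} are that $K$ is a full compact connected $f$-invariant continuum containing $0$, and that the dynamics on $K$ is uniformly recurrent; more precisely, with $(q_n)$ the denominators of the convergents of $\alpha$, one has $f^{q_n}\to\mathrm{Id}$ uniformly on $K$ (in fact on a neighborhood of $K$ on which the relevant iterates are defined). Assume for contradiction an orbit $z_n=f^n(z_0)\to 0$ with $z_0\ne 0$. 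For $N$ large the tail $(z_n)_{n\ge N}$ lies in $\overline{U}$, so $z_N$ belongs to the invariant set $\bigcap_{n\ge 0}f^{-n}(\overline{U})$; after possibly shrinking $U$ to a smaller Jordan neighborhood $U'\subset U$ (chosen so that $z_N$ is arbitrarily close to $0$ and so that the hedgehog $K_{U'}$ captures the tail), one obtains $z_N\in K_{U'}$. Uniform recurrence then gives $z_{N+q_n}=f^{q_n}(z_N)\to z_N$, while the hypothesis forces $z_{N+q_n}\to 0$; hence $z_N=0$, and by local injectivity of $f$ near $0$ we conclude $z_0=0$, a contradiction.

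The principal obstacle is precisely the step that places $z_N$ in the connected component of $0$ of the invariant set, since a priori that set can have several components and $z_N$ might lie in one disjoint from $K$. The resolution exploits that hedgehogs form a cofinal system in the neighborhood basis of $0$: by passing to a smaller $U'$ one arranges that both $z_N$ and $0$ sit in a common hedgehog, using that $z_N$ is pushed arbitrarily close to $0$ by the assumed convergence. Once this localization is secured, the rigid, rotation-like behavior of the dynamics on hedgehogs — codified by the uniform convergence $f^{q_n}\to\mathrm{Id}$ — delivers the contradiction, and applying the same reasoning to the local inverse of $f$ finishes the backward case.
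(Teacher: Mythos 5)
The core gap is the step that places $z_N$ on the hedgehog. Having the forward orbit of $z_N$ contained in $\overline{U'}$ only tells you that $z_N$ lies in the set $\bigcap_{k\ge0}f^{-k}(\overline{U'})$; the hedgehog $K_{U'}$ is the \emph{connected component of $0$} in (a totally invariant refinement of) that set, and there is no reason $z_N$ should lie in the same component as $0$. Shrinking $U'$ so that $z_N$ is "close to $0$" does not help: the tail $\{z_n:n\ge N\}\cup\{0\}$ is a discrete set accumulating at $0$, so adjoining it to $K_{U'}$ does not produce a connected set unless $z_N$ is already on $K_{U'}$. In general the orbit of a nonzero point converging to $0$ need never touch the hedgehog, which is a thin, typically non-locally-connected continuum; the hypothesis $z_n\to0$ gives no localization of $z_N$ onto $K_{U'}$. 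So the uniform recurrence $f^{q_n}\to\mathrm{Id}$ on $K_{U'}$ simply does not apply to the point you need it for, and the contradiction $z_N=0$ is not reached. (The linearizable case and the reduction of backward to forward iteration via the local inverse are fine.)

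The paper's route is specifically designed to control orbits \emph{off} the hedgehog, which is exactly what your argument misses. After the Carathéodory/conformal-representation dictionary, the orbit of $z_0$ becomes an orbit of an analytic circle diffeomorphism $g$ in $\CC-\overline{\DD}$ converging to $\SS^1$. One constructs quasi-invariant Jordan curves $\gamma_n$ surrounding the circle with the property (the final Proposition of the paper) that any finite orbit segment of $g$ which starts outside $\gamma_n$ and has an iterate inside the annulus between $\SS^1$ and $\gamma_n$ must pass within bounded Poincar\'e distance of \emph{every} point of $\gamma_n$. An orbit accumulating at the fixed point must cross each such transient annulus, and this dense visiting is incompatible with convergence. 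That mechanism replaces the recurrence-on-the-hedgehog step you attempted and is the genuine content you would need to supply.
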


This Theorem is the key to the solution of some old problems. It solves a question of P. Fatou (1920, \cite{F}), and 
a question for singularities of differential equations in the complex domain 
due to \'E Picard (1896, \cite{P} p.30) and H. Dulac (1904, \cite{D} p. 7, case 2). The original problem was 
raised by C. Briot and J.-C. Bouquet in 1856 (\cite{BB}, section 85). 
For the relation with singularities of holomorphic foliations in $\CC^2$ in the Siegel domain we refer to \cite{PM-Yo} where we establish with 
J.-Ch Yoccoz a complete caracterization by the holonomy.

\medskip

The main tool for studying the dynamics on the hedgehog is the construction of a sequence $(\gamma_n)$ of quasi-invariant Jordan 
curves that surround and osculate the hedgehog. The Jordan domains $\Omega_n$ bounded by $\gamma_n$ are neighborhoods of the hedgehog. These 
curves are close to the hedgehog and are almost invariant by high iterates of the dynamics. Moreover, the $q_n$-iterates of the dynamics 
are close to the identity on these curves. One concludes, using the maximum principle, that the same happens on the hedgehog. Also, orbits 
near these quasi-invariant curves travel all around and form an $\epsilon_n$-dense orbit. In particular they cannot 
jump inside the domain $\Omega_n$ without $\eps_n$-visiting all points of $\gamma_n$. This is the idea behind the proof of Theorem 1.

\medskip

Quasi-invariant curves are constructed for analytic circle diffeomorphisms in a complex tubular neighborhood of the circle. The relation 
between indifferent fixed points and analytic circle diffeomorphisms was elucidated using hedgehogs by a construction 
presented in \cite{PM1}. More precisely,
for any indifferent irrational non-linearizable fixed point of an holomorphic 
map $f(z)=e^{2\pi i \a} z + \cO (z^2)$, $\a\in \RR-\QQ$, in a neighborhood of $0$ 
where $f$ and $f^{-1}$ are well defined, there exists a non-trivial (i.e. larger than the fixed point) full compact connected set $K$ which is 
totally invariant
$$
K=f(K)=f^{-1}(K) \ .
$$

The compact $K$ is a hedgehog. 

\medskip

\begin{figure}[h]
\centering
\resizebox{9cm}{!}{\includegraphics{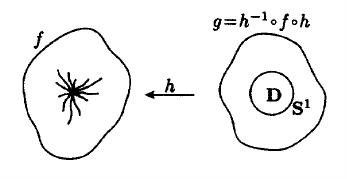}}    
\caption{Dictionary between fixed points and circle maps.} 
\end{figure}

\medskip

We consider a conformal representation $h: \CC-\overline{\DD} \to \CC-K$ ($\DD$ is the unit disk), and we conjugate the dynamics to a univalent map 
$g$ in an annulus $V$ having the circle $\SS^1=\partial \DD$ as the inner boundary,
$$
g=h^{-1}\circ f\circ h : V \to \CC \ .
$$
One can show (\cite{PM2}, \cite{PM1}) that $K$ is not locally connected and $h$ does not extend to a continuous correspondence between $\SS^1$ and $K$. 
Nevertheless, it is easy to prove that 
$f$ extends continuously to Caratheodory's prime-end compactification of $\CC-K$. This shows that $g$ extends continuously to $\SS^1$ and its Schwarz
reflection defines an analytic map of the circle defined on $V\cup \SS^1 \cup \bar V$, where $\bar V$ is the reflected annulus of $V$. Then we can show
that $g$ is an analytic circle diffeomorphism with rotation number $\a$. 
Therefore, the dynamics in a complex neighborhood of a hedgehog corresponds to the dynamics of an analytic circle diffeomorphism.

\medskip

The properties of quasi-invariant curves for $g$ from \cite{PM0} and \cite{PM1} can be formulated using the Poincar\'e metric of 
the exterior of the closed unit disk:

\begin{theorem} (Quasi-invariant curves) \label{thm_quasi}
 Let $g$ be an analytic circle diffeomorphism with irrational rotation number $\a$. 
 Let $(p_n/q_n)_{n\geq 0}$ be the sequence of convergents of $\alpha$ given 
 by the continued fraction algorithm.
 
 Given $C_0>0$ there is  $n_0 \geq 0$ large enough such that there is a sequence of quasi-invariant curves $(\gamma_n)_{n\geq n_0}$ for $g$ which are Jordan 
 curves homotopic to $\SS^1$ and exterior to $\overline{\DD}$ such that all the iterates
 $g^j$, $0\leq j\leq q_{n}$, are defined on a neighborhood of the closure of the annulus $U_n$ bounded by $\SS^1$ and $\gamma_n$, 
 and we have 
 $$
 \cD_{P}(g^j(\gamma_n), \gamma_n) \leq C_0 \ ,
 $$
 where $\cD_P$ denotes the Hausdorff distance between compact sets associated to $d_P$, the Poincar\'e distance in $\CC-\overline{\DD}$.
 We also have for any $z\in \gamma_n$, $d_P(g^{q_{n}} (z), z) \leq C_0$, that is,
 $$
 ||g^{q_{n}} -{\hbox{\rm id}}||_{C^O_P(\gamma_n)} \leq C_0 \ .
 $$
\end{theorem}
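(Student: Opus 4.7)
The plan is to lift the problem to the universal cover. Via $z \mapsto e^{2\pi i z}$, the domain $\CC - \overline{\DD}$ is covered by the upper half-plane $\HH$, with $\SS^1$ corresponding to $\RR$ and the Poincar\'e metric of $\CC-\overline{\DD}$ pulling back to (a constant multiple of) $|dz|/\Im z$. The diffeomorphism $g$ lifts to $G : \HH \to \HH$ commuting with integer translations, and the theorem translates to the following: produce $\ZZ$-invariant Jordan arcs $\widetilde\gamma_n \subset \HH$ whose iterates $G^j(\widetilde\gamma_n)$ for $0 \le j \le q_n$ lie at Hausdorff Poincar\'e distance $\le C_0$ from $\widetilde\gamma_n$, with $d_P(G^{q_n}(z), z + p_n) \le C_0$ pointwise on $\widetilde\gamma_n$.

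The key input is the Denjoy-Yoccoz lemma (Proposition 4.4 in \cite{Yo4}), which I would read geometrically as follows: there is a uniform $H > 0$ and an $n_0$ such that for all $n \ge n_0$ and $0 \le j \le q_n$, the iterate $G^j$ is defined and univalent on the horizontal strip $S_n = \{0 < \Im z < H/q_n\}$, with hyperbolic non-linearity bounded independently of $n$ and $j$. Equivalently, each $G^j$ acts as a quasi-isometry on $S_n$ for the hyperbolic metric, with constants independent of $n$.

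With this in hand I would construct $\widetilde\gamma_n$ by concatenation. Fix $z_0$ with $\Im z_0 = c/q_n$ for a small constant $c < H$, and let $\sigma_0$ denote the hyperbolic geodesic from $z_0$ to $G(z_0)$. Define
\[
\eta_n = \sigma_0 \cup G(\sigma_0) \cup G^2(\sigma_0) \cup \cdots \cup G^{q_n-1}(\sigma_0),
\]
an arc from $z_0$ to $G^{q_n}(z_0)$. Since $|q_n\alpha - p_n| < 1/q_n$, the endpoint $G^{q_n}(z_0)$ lies at Poincar\'e distance $O(1)$ from $z_0 + p_n$, so $\eta_n$ can be closed by appending a short hyperbolic geodesic from $G^{q_n}(z_0)$ to $z_0 + p_n$; take $\widetilde\gamma_n$ to be the $\ZZ$-orbit of this closed-up arc. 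The Jordan property holds because Denjoy's combinatorial observation forces the projections of $G^j(z_0)$ to $\RR/\ZZ$ to occur in the cyclic order of $\{j\alpha\}$ for $0 \le j < q_n$, so the consecutive arcs $G^j(\sigma_0)$ occupy disjoint horizontal ranges and cannot cross.

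For the quasi-invariance bound, observe that $G^k(\widetilde\gamma_n)$ is the concatenation of $G^k(\sigma_0), \ldots, G^{q_n+k-1}(\sigma_0)$ together with a translated closing geodesic; compared arc-by-arc with $\widetilde\gamma_n$, each piece differs only by composition with $G^{q_n}$ and translation by $p_n$, both of which produce displacements of hyperbolic size $O(1)$ by the Denjoy-Yoccoz distortion estimate. The pointwise bound $d_P(G^{q_n}(z), z+p_n) \le C_0$ on $\widetilde\gamma_n$ follows similarly from the quasi-isometric action of $G^{q_n}$ on $S_n$ combined with $|q_n\alpha - p_n| < 1/q_n$. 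The main obstacle is the uniformity: while the endpoint estimate at $z_0$ is essentially classical Denjoy, controlling $G^j$ on the whole strip $S_n$ with hyperbolic constants independent of both $n$ and $j \in \{0,\ldots,q_n\}$ is precisely the content of the complex Denjoy-Yoccoz lemma, and it is what pins the admissible height to $\Im z \sim 1/q_n$ --- above this scale the iterates $G^j$ lose their univalence, below it the residual translation $q_n\alpha - p_n$ becomes hyperbolically too large to absorb.
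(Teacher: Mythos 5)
Your approach is genuinely different from the paper's, but it rests on a misreading of the Denjoy--Yoccoz lemma that is exactly the point the paper is at pains to clarify. You interpret the lemma as giving control of $G^j$, $0 \le j \le q_n$, on a \emph{horizontal strip} $S_n = \{0 < \Im z < H/q_n\}$, and you place the base point at height $\Im z_0 = c/q_n$. That is not the domain the lemma controls. The lemma is stated for points $z_0 = x_0 + i\,m_n(x_0)\,y_0$ with $|y_0| \le D_0$, i.e.\ on the \emph{variable-height} region $\{ x + iy : |y| < D_0\, m_n(x) \}$ whose ceiling at $x$ is proportional to the local gap $m_n(x) = g^{q_n}(x) - x - p_n$. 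For a general irrational (in particular Liouville) rotation number, the ratio $M_n/m_n$ is not bounded, so $m_n(x)$ is \emph{not} uniformly comparable to $1/q_n$ across $x$: a base point at height $c/q_n$ can land at $y_0 = c/(q_n m_n(x_0)) \gg D_0$, outside the validity of the lemma. The paper's ``hyperbolic interpretation'' is precisely that the hyperbolically correct normalization is $y_0 = \Im z_0 / m_n(x_0)$, and the reference objects must live at height $m_n(x)\,y_0$, not at fixed euclidean height. Your curve built from the orbit of $z_0$ does inherit the correct variable-height profile \emph{a posteriori}, but your justification for why the iterates stay under control invokes the wrong domain, so the proof does not close.

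Beyond this, the mechanism differs. The paper defines once and for all the interpolated flow $\varphi^{(n)}_t(z_0) = \varphi_t(x_0) + i\,|m_n(\varphi_t(x_0))|\,y_0$ (with $\varphi_t$ the real topological flow conjugating $g$ to $T_\alpha$), takes the quasi-invariant curve $\gamma_n$ to be a flow line, and then proves the Hyperbolic Denjoy--Yoccoz Lemma: $d_P\bigl(g^j(z_0), \varphi^{(n)}_j(z_0)\bigr) \le C_0$ for $0 \le j \le q_{n+1}$. Flow lines are graphs over $\RR$, so the Jordan property is automatic, and the quasi-invariance follows by comparing the dynamics to a \emph{fixed} reference curve plus the combinatorial covering lemma (Lemma~\ref{lem_comb}) and a local bounded-diameter estimate (Lemma~\ref{lem_bounded}). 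Your concatenated-geodesic curve is not a graph, so the Jordan property requires an argument (the consecutive pieces $G^j(\sigma_0)$ do not literally ``occupy disjoint horizontal ranges'' --- a Poincar\'e-bounded neighborhood of the segment spreads horizontally by $O(m_n)$, so adjacent pieces overlap), and the Hausdorff comparison of $G^k(\widetilde\gamma_n)$ with $\widetilde\gamma_n$ must be done piece-by-piece with careful bookkeeping of the closing arc. These gaps are repairable in principle but they are doing real work that you have waved away. Finally, you do not address the reduction to small non-linearity $\|D\log Dg\|_{C^0} < \eps_0$: the paper handles the general case by a purely real renormalization step, which your argument silently assumes away.
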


The delicate, and useful, part of the construction of quasi-invariant curves is to obtain the estimates for the Poincar\'e metric, which is much 
stronger than the estimates for the euclidean metric since the curves $\gamma_n$ are close to $\SS^1$. 
This is also what is needed in order to transport the curves 
by the conformal representation $h$ and keep the estimates in the dynamical plane with the fixed point for $f$ for the Poincar\'e metric outside 
of the hedgehog. 
We give a new construction of quasi-invariant curves without using renormalization. It builds on the simple observation that Denjoy-Yoccoz Lemma 
has a natural hyperbolic interpretation. We carry out in this article the simpler construction 
that is sufficient for the main applications. We assume that the non-linearity of $g$ is small, that is, $|| D\log Dg ||_{C^0} <\eps_0$. The general case 
is done by carrying out a purely real renormalization as in section 3.6 of \cite{Yo4} that yields a circle map with arbitrarely small non-linearity. 
Then we transport (only once) the quasi-invariant curves by the purely real renormalization that extends to a sectorial renormalization in a small complex 
neigborhood of the circle as in \cite{PM5}.

\bigskip
\bigskip
\bigskip

\section{Analytic circle diffeomorphisms.}

\subsection{Notations.}

We denote by $\TT =\RR/\ZZ$ the abstract circle, and $\SS^1 =
E (\TT )$ its embedding in the complex plane $\CC$ given by 
the exponential mapping $E(x)=e^{2\pi i x}$.

We study analytic diffeomorphisms of the circle, but we 
prefer to work at the level of the universal covering, the real line, with 
its standard embedding $\RR \subset \CC$. We denote by $D^\omega (\TT)$ 
the space of non decreasing analytic diffeomorphisms $g$ of the real line 
such that, for any $x\in \RR$, $g(x+1)=g(x)+1$, which is the commutation to the 
generator of the deck transformations $T(x)=x+1$. An element of the space 
$D^\omega (\TT)$ has a well defined rotation number $\rho (g)\in \RR$. The order preserving 
diffeomorphism $g$ is conjugated to the rigid translation $T_{\rho(g)}: x\mapsto x+\rho(g)$, 
by an orientation preserving homeomorphism $h:\RR \to \RR$, such that $h(x+1)=h(x)+1$.

For $\Delta >0$, we note $B_{\Delta} =\{ z\in \CC ; |\Im z | < \Delta \}$, and 
$A_\Delta = E(B_\Delta )$. The subspace $D^\omega (\TT, \Delta )\subset D^\omega (\TT )$
is composed by the elements of $D^\omega (\TT)$ which extend analytically to 
a holomorphic diffeomorphism, denoted again by $g$, such that $g$ and $g^{-1}$
are defined on $B_\Delta$.

\subsection{Real estimates.}

We refer to \cite{Yo4} for the results on this section. We assume that the orientation preserving 
circle diffeomorphism $g$ is $C^3$
and that the rotation number $\alpha =\rho (g)$ is irrational. 
We consider the convergents $(p_n /q_n)_{n\geq 0}$ of $\alpha$ obtained by the continued fraction algorithm (see \cite{HW} for 
notations and basic properties of continued fractions). 

\medskip

For $n\geq 0$, we define the map $g_n (x) =g^{q_n}(x)-p_n$ and the intervals $I_n(x)=[x, g_n(x)]$,
$J_n(x)=I_n(x) \cup I_n(g_n^{-1}(x)) =[g_n^{-1}(x), g_n(x)]$. Let 
$m_n (x)=g^{q_n} (x)-x-p_n=\pm|I_n(x)|$,  $M_n =\sup_{\RR} |m_n (x)|$, and $m_n =\min_{\RR} |m_n (x)|$. 
Topological linearization is equivalent to $\lim_{n\to +\infty } M_n =0$. This is always true 
for analytic diffeomorphisms by Denjoy's Theorem, that holds for $C^1$ diffeomorphisms
such that $\log Dg$ has bounded variation. 

\medskip

Since $g$ is topologically linearizable, combinatorics of the irrational translation (or the continued fration algorithm) shows:

\begin{lemma} \label{lem_comb}
Let $x\in \RR$, $0\leq j < q_{n+1}$ and $k\in \ZZ$ the intervals $g^j\circ T^k(I_n(x))$
have disjoint interiors, and the intervals $g^j\circ T^k(J_n(x))$ cover $\RR$ at most twice.
\end{lemma}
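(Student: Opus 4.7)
The plan is to reduce the statement to the case of the rigid rotation $R_\alpha(x)=x+\alpha$ by topological conjugacy. By Denjoy's theorem (invoked in the paragraph just above the lemma, since $g\in D^\omega(\TT)$), there is a monotone homeomorphism $h:\RR\to\RR$ with $g\circ h = h\circ R_\alpha$ and $h\circ T = T\circ h$. Setting $I_n^{R_\alpha}(y)=[y,y+q_n\alpha-p_n]$ and using that $h$ commutes with $T$, one checks $h(I_n^{R_\alpha}(h^{-1}(x)))=I_n(x)$, and hence
$$
g^j\circ T^k(I_n(x)) \;=\; h\bigl(R_\alpha^j\circ T^k(I_n^{R_\alpha}(h^{-1}(x)))\bigr).
$$
Since $h$ is an order preserving homeomorphism, it preserves interiors and intersections of intervals, so disjointness of interiors and covering multiplicity are unchanged by $h$, and the problem reduces to the rotation.

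For the rotation I would pass to the quotient $\TT=\RR/\ZZ$. The family $\{R_\alpha^j\circ T^k(I_n^{R_\alpha}(y))\}_{0\le j<q_{n+1},\,k\in\ZZ}$ descends to the $q_{n+1}$ translates $\{R_\alpha^j(\bar I)\}_{0\le j<q_{n+1}}$ of a single arc $\bar I\subset \TT$ of length $\|q_n\alpha\|$, and disjointness of interiors on $\RR$ is equivalent to disjointness of interiors of these $q_{n+1}$ arcs on $\TT$. Two such arcs $R_\alpha^i(\bar I)$ and $R_\alpha^j(\bar I)$ overlap in interior if and only if $\|(j-i)\alpha\|<\|q_n\alpha\|$. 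The best approximation property of the continued fraction convergents gives $\|m\alpha\|\ge\|q_n\alpha\|$ for every integer $m$ with $0<m<q_{n+1}$, with equality only at $m=q_n$; in the equality case the two arcs are adjacent and share only an endpoint. This yields the disjoint interiors statement for the $I_n$ family.

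For the $J_n$ statement, I would use that $J_n(x) = I_n(x)\cup I_n(g_n^{-1}(x))$, so each $g^j\circ T^k(J_n(x))$ is the union of one interval from the $I_n$ family based at $x$ and one from the $I_n$ family based at $g_n^{-1}(x)$. Applying the disjoint-interiors result to each base point separately, any given point of $\RR$ lies in at most one $g^j\circ T^k(I_n(x))$ and in at most one $g^j\circ T^k(I_n(g_n^{-1}(x)))$, hence in at most two of the $g^j\circ T^k(J_n(x))$.

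The main obstacle I expect is the clean invocation of the best approximation theorem for convergents, and in particular the handling of the equality case $m=q_n$: one must keep track of the sign of $q_n\alpha-p_n$ (which alternates with $n$) to confirm that the corresponding pair of arcs is indeed adjacent and meets only at a single endpoint. The remainder of the argument is routine orientation-preserving bookkeeping transferred through $h$.
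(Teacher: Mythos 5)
The paper offers no proof of this lemma; it simply says that it follows from ``combinatorics of the irrational translation (or the continued fraction algorithm).'' Your reduction to the rigid rotation by Denjoy's linearizing homeomorphism and the appeal to the best-approximation property of the convergents are exactly the route the paper has in mind, and your argument for the disjointness of the interiors of the $g^j\circ T^k(I_n(x))$ and for the multiplicity-at-most-two bound for the $J_n$ family is correct.

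However, the lemma asserts more than a multiplicity bound: the phrase ``cover $\RR$ at most twice'' claims that the intervals $g^j\circ T^k(J_n(x))$ form a covering of $\RR$, with multiplicity at most $2$. The covering part is used essentially later in the paper (in Lemma~\ref{lem_cover}, the pieces $\tilde J_n^{(n)}$ are asserted to cover the flow line $\Phi^{(n)}_{z_0}$ precisely by invoking Lemma~\ref{lem_comb}), and your argument does not establish it: showing that each point lies in at most one interval from each of the two $I_n$-subfamilies says nothing about whether every point lies in at least one $J_n$-interval. To close this, after reducing to the rotation you should note that the arcs $R_\alpha^j(\bar J)$, $0\le j<q_{n+1}$, are the arcs of radius $\|q_n\alpha\|$ centered at the orbit points $j\alpha\in\TT$, and that by the three-distance (Steinhaus) theorem the gaps between consecutive points of $\{j\alpha : 0\le j<q_{n+1}\}$ have length either $\|q_n\alpha\|$ or $\|q_n\alpha\|+\|q_{n+1}\alpha\|$, hence always $<2\|q_n\alpha\|$; therefore every point of $\TT$ is within $\|q_n\alpha\|$ of some $j\alpha$ with $0\le j<q_{n+1}$ and so lies in the corresponding $J_n$-arc. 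Transporting back through $h$ then gives the covering of $\RR$.

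Two minor remarks. First, when you say ``any given point of $\RR$ lies in at most one'' interval with disjoint interiors, you should note this is literally true only off the (countable) set of endpoints, where a point can be shared; this does not affect the multiplicity-two conclusion but is worth stating. Second, the overlap criterion you use for arcs of length $\delta=\|q_n\alpha\|$ implicitly needs $2\delta<1$, which holds since $\|q_n\alpha\|<1/2$ for all $n$.
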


\medskip

We have the following estimates on the Schwarzian 
derivatives of the iterates of $f$, for $0\leq j\leq q_{n+1}$,
$$
\left |S g^j (x)\right | \leq \frac{M_n e^{2V}S}{|I_n(x)|^2} \ ,
$$
with $S=||Sg||_{C^0(\RR )}$ and $V=\hbox{\rm {Var}} \log Dg$.

These estimates imply a control of the non-linearity of the iterates (Corollary 3.18 in \cite{Yo4}): 

\begin{proposition}
For $0\leq j\leq 2q_{n+1}$, $c=\sqrt{2S} e^V$,
$$
|| D \log Dg^j ||_{C^0(\RR )} \leq c \, \frac{M_n^{1/2}}{m_n} \ .
$$
\end{proposition}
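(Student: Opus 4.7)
The plan is to derive the non-linearity bound directly from the pointwise Schwarzian estimate in the preceding display, via the algebraic identity
$$
Sh \;=\; D^2\log Dh \;-\; \tfrac{1}{2}\,(D\log Dh)^2 .
$$
Setting $\phi := D\log Dg^j$, applying this identity with $h=g^j$ recasts the Schwarzian as a Riccati-type relation
$$
D\phi \;=\; Sg^j + \tfrac{1}{2}\,\phi^2 .
$$

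The key observation is that $\phi$ is a smooth $1$-periodic function on $\RR$: since $g$, and hence $g^j$, commutes with $T(x)=x+1$, the derivative $Dg^j$ is $1$-periodic, and so is $\phi$. Consequently the supremum $\|\phi\|_{C^0(\RR)}$ is attained at some point $x_0\in\RR$, which is an interior extremum of $\phi$ (or of $-\phi$). Evaluating the Riccati relation at $x_0$ gives $D\phi(x_0)=0$, whence
$$
\phi(x_0)^2 \;=\; -2\,Sg^j(x_0) ,
$$
forcing $Sg^j(x_0)\leq 0$ and $\|\phi\|_{C^0(\RR)}^2 \leq 2\,|Sg^j(x_0)|$.

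Next I would insert the already established Schwarzian bound at this specific point; since $|I_n(x_0)|=|m_n(x_0)|\geq m_n$ we get
$$
|Sg^j(x_0)| \;\leq\; \frac{M_n\, e^{2V}\, S}{|I_n(x_0)|^2} \;\leq\; \frac{M_n\, e^{2V}\, S}{m_n^2} ,
$$
and taking square roots yields precisely
$$
\|D\log Dg^j\|_{C^0(\RR)} \;\leq\; \sqrt{2S}\,e^V\cdot\frac{M_n^{1/2}}{m_n} \;=\; c\,\frac{M_n^{1/2}}{m_n} ,
$$
with $c=\sqrt{2S}\,e^V$ as claimed.

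There is essentially no serious obstacle: once one recognizes that the Schwarzian identity converts a pointwise bound on $Sg^j$ into a pointwise bound on the non-linearity $\phi$ via an interior extremum argument on a periodic function, the proposition becomes a one-line consequence of the preceding estimate. The only minor point to verify is that the Schwarzian estimate, stated in the display for $0\leq j\leq q_{n+1}$, remains valid on the doubled range $0\leq j\leq 2q_{n+1}$ needed here; this follows from the same combinatorial input (Lemma~\ref{lem_comb}), since the iterated intervals up to time $2q_{n+1}$ still cover $\RR$ with uniformly bounded multiplicity, at worst contributing a universal constant that is absorbed in the final estimate.
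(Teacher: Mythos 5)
The paper does not actually prove this proposition; it is quoted verbatim as Corollary~3.18 of \cite{Yo4}, so there is no internal proof to compare against. Your argument is correct and is precisely the classical one that Yoccoz uses: the Schwarzian identity $Sh=D\phi-\tfrac12\phi^2$ with $\phi=D\log Dh$, plus the observation that $\phi$ is $1$-periodic so $\|\phi\|_{C^0}$ is attained at an interior critical point $x_0$, where $D\phi(x_0)=0$ turns the identity into $\phi(x_0)^2=-2\,Sg^j(x_0)$, and then the pointwise Schwarzian bound together with $|I_n(x_0)|\ge m_n$ gives exactly $c=\sqrt{2S}\,e^V$.

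The one place you should be more careful is the closing remark about the range $0\le j\le 2q_{n+1}$. Your extremum argument yields the stated constant only if the Schwarzian bound itself holds with the displayed constant on the doubled range. If one re-derives it by the same telescoping sum, the intervals $g^{l}(I_n(x))$, $0\le l<2q_{n+1}$, cover $\RR$ modulo~$1$ with multiplicity $2$ rather than being pairwise disjoint, so the resulting Schwarzian bound acquires an extra factor $2$ and your final constant would be $2\sqrt{S}\,e^V$ rather than $\sqrt{2S}\,e^V$. Saying the factor ``is absorbed in the final estimate'' is therefore not quite right when the proposition specifies the constant exactly. This is really a quirk of the way the present paper restates the intermediate Schwarzian estimate (only up to $q_{n+1}$); in Yoccoz's original it is set up so that the bound used here is available on $[0,2q_{n+1}]$. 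Your overall route, however, is the right one.
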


These give estimates on $g_n$. More precisely we have (Corollary 3.20 in \cite{Yo4}):

\begin{proposition} \label{prop_estimate} For some constant $C >0$, we have
 $$
 ||\log Dg_n ||_{C^0(\RR )}\leq C M_n^{1/2} \ .
 $$
\end{proposition}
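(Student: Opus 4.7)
The plan is to bootstrap the non-linearity bound $\|D\log Dg_n\|_{C^0}\le cM_n^{1/2}/m_n$ of the preceding proposition into the sharper $C^0$ estimate by gaining an extra factor $m_n$. First, the commutation $g^{q_n}(x+1)=g^{q_n}(x)+1$ makes $\log Dg_n$ $1$-periodic, and $\int_0^1 Dg_n\,dx=g_n(1)-g_n(0)=1$ together with positivity and continuity of $Dg_n$ furnishes, via the intermediate value theorem, a point $x_0\in[0,1]$ with $\log Dg_n(x_0)=0$.

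Second, one establishes the bounded geometry estimate $M_n\le C'\,m_n$, with $C'$ depending only on $V$: Denjoy's inequality (available since $\log Dg$ has finite total variation $V$) gives $\|\log Dg^j\|_{C^0}\le V$ uniformly in $j$, so the mean value identity $|I_n(g^j(y))|=Dg^j(\xi)\,|I_n(y)|$ for some $\xi\in I_n(y)$ yields $e^{-V}\le|I_n(g^j(y))|/|I_n(y)|\le e^V$. Combined with Lemma \ref{lem_comb}, the iterates $\{g^j(y)\}_{0\le j<q_{n+1}}$ (with $\ZZ$-translates) tile $\RR$ by intervals of comparable length, and a Lipschitz comparison in $y$ propagates this to $M_n\le C'\,m_n$.

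Third, differentiating the commutation $g^{q_n}\circ g^j=g^j\circ g^{q_n}$ and using $\log Dg_n(x_0)=0$ produces the cocycle identity
$$\log Dg_n(g^j(x_0))=\log Dg^j(g_n(x_0))-\log Dg^j(x_0).$$
Since $|g_n(x_0)-x_0|\le M_n$, the non-linearity bound applied to $g^j$ for $0\le j\le 2q_{n+1}$ yields $|\log Dg_n(g^j(x_0))|\le (cM_n^{1/2}/m_n)\cdot M_n\le cC'M_n^{1/2}$. The bounded geometry from Step 2 also ensures that the orbit $\{g^j(x_0)\}_{0\le j<q_{n+1}}$ is $M_n$-dense modulo $\ZZ$; for arbitrary $x\in\RR$ one picks a nearby $g^j(x_0)$ (within distance $M_n$ modulo $\ZZ$) and bounds $|\log Dg_n(x)-\log Dg_n(g^j(x_0))|\le (cM_n^{1/2}/m_n)\cdot M_n\le cC'M_n^{1/2}$. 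Combining gives $\|\log Dg_n\|_{C^0}\le 2cC'M_n^{1/2}$, which is the desired estimate.

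The main obstacle is the bounded geometry $M_n\le C'm_n$: without it, the $1/m_n$ loss in the non-linearity bound dominates the sought $M_n^{1/2}$. Once it is in hand, the remaining pieces---the cocycle identity, Lemma \ref{lem_comb}, and the derivative bound---slot together routinely.
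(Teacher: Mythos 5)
The paper itself gives no proof of this proposition---it simply cites Corollary~3.20 of Yoccoz's Cetraro notes---so your attempt cannot be compared to ``the paper's own proof.'' Judged on its own terms, Steps~1 and~3 are in good shape: the mean-value point $x_0$ with $Dg_n(x_0)=1$ exists; the cocycle identity $\log Dg_n(g^j(x_0))=\log Dg^j(g_n(x_0))-\log Dg^j(x_0)$ is correct and is exactly the right device; and the orbit $\{g^j(x_0)\}_{0\le j<q_{n+1}}$ is indeed $O(M_n)$-dense modulo~$\ZZ$, since by Lemma~\ref{lem_comb} the sets $J_n(g^j(x_0))$ cover and each has length at most $2M_n$ (no bounded geometry is needed for this).

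The genuine gap is in Step~2, in the sentence ``Denjoy's inequality \dots\ gives $\|\log Dg^j\|_{C^0}\le V$ uniformly in $j$.'' That is false. Denjoy's inequality is the statement $\|\log Dg^{q_n}\|_{C^0}\le V$ for the special return times $q_n$; it does not extend to all $0\le j<q_{n+1}$. Indeed $\log Dg^{kq_n}(x)=\sum_{i<k}\log Dg^{q_n}(g^{iq_n}(x))$, and for $k$ up to $a_{n+1}=\lfloor q_{n+1}/q_n\rfloor$ one only gets the bound $kV$, which is unbounded when $a_{n+1}$ is large (Liouville rotation numbers). What the Denjoy mechanism actually gives, via the disjointness in Lemma~\ref{lem_comb}, is a \emph{distortion} bound $|\log Dg^j(y)-\log Dg^j(y')|\le V$ for $y,y'$ in a common $I_n$-interval, not a pointwise bound on $\log Dg^j$ itself. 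Your deduction $e^{-V}\le|I_n(g^j(y))|/|I_n(y)|\le e^V$ therefore does not follow, and Step~2 as written does not establish $M_n\le C'\,m_n$. Since the whole strategy hinges on this inequality to absorb the $1/m_n$ loss from the non-linearity bound, the argument does not close. The global comparability of the $I_n$-intervals is a real a~priori bound in the Herman--\'Swi\k{a}tek--Yoccoz circle of ideas and requires a genuinely different argument (iterated bounded distortion along the dynamical partition, or a pointwise refinement of the non-linearity estimate to $|D\log Dg^j(x)|\lesssim M_n^{1/2}/|I_n(x)|$ so that the integral over $I_n(x_0)$ in the cocycle step is scale-invariant and no global $M_n/m_n$ comparison is needed at all). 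As it stands, that ingredient must either be cited as an independent lemma or proved correctly; the justification you give would not survive scrutiny.
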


\begin{corollary}  \label{cor_1}
For any $\eps>0$, there exists $n_0\geq 1$ such that for $n\geq n_0$,  we have
$$
 ||Dg_n -1||_{C^0(\RR )}\leq \eps \ .
 $$
\end{corollary}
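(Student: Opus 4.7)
The plan is to derive the corollary directly from Proposition \ref{prop_estimate} together with the fact that $M_n \to 0$ as $n\to\infty$. The latter is guaranteed because $g$, being an analytic (in particular $C^1$ with $\log Dg$ of bounded variation) diffeomorphism of $\TT$ with irrational rotation number, is topologically linearizable by Denjoy's theorem, which as noted in the text is equivalent to $M_n \to 0$.

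The main step is then purely analytic: convert a bound on $\log Dg_n$ into a bound on $Dg_n - 1$. Given $\eps>0$, I would pick $\eta = \eta(\eps)>0$ small enough that $|t|\leq \eta$ implies $|e^t - 1|\leq \eps$; concretely, any $\eta \leq \log(1+\eps)$ works, since then $e^\eta - 1 \leq \eps$ and $1 - e^{-\eta}\leq \eps$ as well. Using Proposition \ref{prop_estimate}, choose $n_0$ so large that for all $n\geq n_0$,
$$
C\, M_n^{1/2} \leq \eta,
$$
which is possible because $M_n \to 0$. Then for $n\geq n_0$ and any $x\in\RR$,
$$
|Dg_n(x) - 1| = \bigl|e^{\log Dg_n(x)} - 1\bigr| \leq \eps,
$$
which is the desired $C^0$-estimate.

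No real obstacle is expected: the corollary is essentially a repackaging of Proposition \ref{prop_estimate} through the elementary inequality $|e^t - 1| \to 0$ as $t\to 0$, combined with the Denjoy-type input $M_n\to 0$. The only point to be careful about is that the bound on $\log Dg_n$ is uniform in $x\in\RR$, which is exactly what Proposition \ref{prop_estimate} provides, so the resulting bound on $Dg_n - 1$ is also uniform.
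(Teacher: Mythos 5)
Your proof is correct and follows essentially the same route as the paper: both deduce the corollary from Proposition \ref{prop_estimate} together with $M_n \to 0$, converting the bound on $\log Dg_n$ into a bound on $Dg_n - 1$ via an elementary exponential estimate (the paper uses $|e^w-1|\leq \tfrac{3}{2}|w|$ for $|w|<\tfrac12$, you use $\eta\leq\log(1+\eps)$, a negligible difference).
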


\begin{proof}
Take $n_0 \geq 1$ large enough so that for $n\geq n_0$, $C M_n^{1/2} <\min (\frac{2}{3}\eps, \frac12) $, then use Proposition \ref{prop_estimate} and 
$\left |e^w-1 \right | \leq \frac{3}{2} |w|$ for $|w|<1/2$.
\end{proof}

%

\begin{corollary}\label{cor_estimate}
For any $\eps>0$, there exists $n_0\geq 1$ such that for $n\geq n_0$, for any $x\in \RR$ and $y\in I_n(x)$ we have
$$
1-\eps \leq \frac{m_n(y)}{m_n(x)}\leq 1+\eps \ .
$$
\end{corollary}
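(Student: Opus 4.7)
The plan is to reduce everything to the non-linearity estimate of Corollary \ref{cor_1}. Observe that by the definitions, $m_n(x) = g_n(x) - x$ (up to a global sign that is independent of $x$), so
$$
m_n(y) - m_n(x) = \bigl(g_n(y) - g_n(x)\bigr) - (y - x).
$$
Applying the mean value theorem to $g_n$ on the interval with endpoints $x$ and $y$, there exists $\xi$ in that interval with $g_n(y) - g_n(x) = Dg_n(\xi)(y - x)$, so
$$
m_n(y) - m_n(x) = \bigl(Dg_n(\xi) - 1\bigr)(y - x).
$$

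Since $y \in I_n(x)$ we have $|y - x| \leq |I_n(x)| = |m_n(x)|$, hence
$$
\left|\frac{m_n(y)}{m_n(x)} - 1\right| \leq |Dg_n(\xi) - 1|.
$$
Now choose $n_0$ large enough so that Corollary \ref{cor_1} gives $\|Dg_n - 1\|_{C^0(\RR)} \leq \eps$ for $n \geq n_0$; this yields the desired bound.

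There is essentially no obstacle beyond unwinding the definitions; the content is entirely in the non-linearity control already established. The only minor point to be careful about is the sign of $m_n(x)$: for irrational rotation number $m_n$ has constant sign (all $g_n(x) - x - p_n$ lie on the same side of $0$, as $g_n$ has no fixed point), so the ratio $m_n(y)/m_n(x)$ is positive and the absolute-value estimate above immediately gives both inequalities in the statement.
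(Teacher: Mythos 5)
Your proof is correct and follows essentially the same route as the paper: you differentiate $m_n$ to get $Dm_n = Dg_n - 1$, apply the mean value theorem together with $|y - x| \leq |m_n(x)|$, and conclude from Corollary \ref{cor_1}. You make the mean value step and the sign-of-$m_n$ point slightly more explicit, but the argument is the same.
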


\begin{proof}
 We have $D m_n(x)=Dg_n(x)-1$, and 
 $$
 \left | m_n(y)-m_n(x)\right | \leq ||Dm_n ||_{C^0(\RR )} |y-x| \leq ||Dg_n -1||_{C^0(\RR )} |m_n(x)| \ .
 $$
 We conclude using Lemma \ref{cor_1}.
\end{proof}

\newpage

\section {Denjoy-Yoccoz Lemma.}

Once we have these real estimates, and, more precisely, a control on the non-linearity, 
we can use them in a complex neighborhood. Using the notations introduced 
in the previous section, the raw form of the Denjoy-Yoccoz Lemma (see  \cite{Yo2}) is the following:

\begin{lemma}[Denjoy-Yoccoz Lemma]
Let $\D >0$ and $g\in D^{\o} (\TT , \D )$ holomorphic 
and continuous on $\overline {B_{\D}}$. We assume that 
$$
\tau =||D \ \log \ Dg ||_{C^0 (\overline {B_{\D}} )} <\frac1{16},
$$
and that for $n\geq 0$, 
$$
M_n \leq \frac{\D} {2 D_0},
$$
where $4< D< \frac{1}{4 \tau }$.

Let $z\in \CC$, we write $z_0=x_0+i m_n (x_0) y_0$, $y_0 \in \CC$, 
and we assume that $|y_0| \leq D_0$. 
Then for $0\leq j\leq q_{n+1}$,  we have 
$$
g^j (z_0) =f^j (x_0) +i m_n (g^j(x_0)) \ y_j,
$$
with 
$$
|y_j-y_0|\leq 3 D_0\tau  |y_0|.
$$
\end{lemma}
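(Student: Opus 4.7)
The plan is a direct induction on $j \in \{0, 1, \ldots, q_{n+1}\}$, with the inductive hypothesis stated uniformly for all $y_0$ with $|y_0| \leq D_0$. This uniformity is essential: at step $j$, I need to invoke the inductive hypothesis with the rescaled initial $y$-parameter $sy_0$ for each $s \in [0,1]$.

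\textbf{Setup for the step.} Fix $j \geq 1$ and assume the conclusion holds for all $i < j$ and all admissible initial parameters. For $s \in [0,1]$, set $z_s := x_0 + i m_n(x_0)(sy_0)$; since $|sy_0| \leq D_0$, the inductive hypothesis applied at level $i < j$ gives $g^i(z_s) = \xi_i + i m_i \tilde y_i(s)$ with $|\tilde y_i(s)| \leq (1 + 3 D_0 \tau)\, s|y_0| \leq (7/4) s|y_0|$. Using $M_n \leq \Delta/(2D_0)$ together with $D_0 \tau < 1/4$, the imaginary parts of these iterates stay uniformly below $\Delta$, so everything remains inside $B_\Delta$ where $g$ is holomorphic.

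\textbf{Integral formula.} Integrating $Dg^j$ along the complex segment $[x_0, z_0]$ and along the real segment $[x_0, x_0 + m_n(x_0)]$, and using the identity $m_j = g^j(x_0 + m_n(x_0)) - g^j(x_0)$ (a direct consequence of $g\circ T = T\circ g$), I obtain
$$ y_j \;=\; y_0 \cdot \frac{\int_0^1 Dg^j(z_s)\, ds}{\int_0^1 Dg^j(x_0 + s m_n(x_0))\, ds} \;=:\; y_0 \cdot \frac{\bar G}{\bar H}, $$
reducing the target estimate to $|\bar G/\bar H - 1| \leq 3 D_0 \tau$.

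\textbf{Estimating the ratio.} The chain rule $\log Dg^j = \sum_{i=0}^{j-1} \log Dg \circ g^i$ reduces $\log Dg^j(w) - \log Dg^j(x_0)$ to a telescoping sum of $\log Dg$-differences, each bounded by $\tau |g^i(w) - \xi_i|$ by the Lipschitz property of $\log Dg$ on $B_\Delta$. Combined with Lemma \ref{lem_comb}, which yields $\sum_{i=0}^{q_{n+1}-1} |m_i| \leq 1$, I get $|\log Dg^j(z_s) - \log Dg^j(x_0)| \leq (7/4) s \tau |y_0|$, and, after a standard real-line distortion, $|\log Dg^j(x_0 + sm_n(x_0)) - \log Dg^j(x_0)| \leq 2 s \tau$, as well as the key difference bound $|\log Dg^j(z_s) - \log Dg^j(x_0 + s m_n(x_0))| \leq (9/4) s D_0 \tau$. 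Since all these log-quantities stay below $1/2$ in modulus (using $\tau < 1/16$ and $D_0 \tau < 1/4$), the inequality $|e^a - e^b| \leq 2 e^{|b|} |a - b|$ together with the extra factor $\int_0^1 s\, ds = 1/2$ yields $|\bar G/\bar H - 1| \leq 3 D_0 \tau$ after routine constant-chasing, closing the induction.

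\textbf{Main obstacle.} The central subtlety is the uniform-in-$y_0$ formulation of the inductive hypothesis. Without it, there is no control on $|\tilde y_i(s)|$ at intermediate points along the complex segment, hence no viable comparison between $\bar G$ and $\bar H$. Once this is in place, the remaining work is a mechanical combination of the Lipschitz bound on $\log Dg$ and Lemma \ref{lem_comb}; reaching the constant $3$ (rather than something slightly larger) is tight but works comfortably given the hypotheses $\tau < 1/16$ and $D_0 \tau < 1/4$.
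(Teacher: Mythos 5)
Your proof is correct, and at its core it is the same argument as the paper's: induct on $j$, use the chain rule $\log Dg^j=\sum_{l<j}\log Dg\circ g^l$, bound each term by the Lipschitz constant $\tau$ of $\log Dg$ together with the inductive control $|y_l|\le\tfrac74|y_0|$, sum via the disjointness mod $1$ of the intervals $g^l(I_n(x_0))$ (Lemma~\ref{lem_comb}), exponentiate, and integrate along the segment $[x_0,z_0]$. The one structural difference is how the factor $m_n(x_j)/m_n(x_0)$ enters: the paper uses the mean value theorem on the real interval $[x_0,g_n(x_0)]$ to produce a single $\zeta$ with $Dg^j(\zeta)=m_n(x_j)/m_n(x_0)$, then shows every $Dg^j$ along the segment $[x_0,z_0]$ is within a factor $1\pm 3D_0\tau$ of this fixed value and integrates once; you instead write $m_n(x_j)/m_n(x_0)=\int_0^1 Dg^j(x_0+sm_n(x_0))\,ds$ via the fundamental theorem of calculus and compare the two $s$-parameterized integrals $\bar G$ and $\bar H$ pointwise in $s$. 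Both close: the paper reaches the constant $3$ as $\tfrac32\cdot 2D_0\tau$ with slack, whereas your matched-$s$ comparison produces the larger pointwise bound $\tfrac{27}{8}sD_0\tau$ but recovers the needed margin from the factor $\int_0^1 s\,ds=\tfrac12$ together with a bounded-distortion estimate on $Dg^j$ restricted to $[x_0,g_n(x_0)]$; the arithmetic works but is tighter, and your stated inequality ``all these log-quantities stay below $1/2$'' is slightly off (one gets $\tfrac94 D_0\tau<\tfrac{9}{16}$), though $|e^w-1|\le\tfrac32|w|$ still holds on that range so nothing breaks. Finally, your emphasis on stating the inductive hypothesis uniformly in $y_0$ is well placed and matches what the paper uses implicitly in the line ``This last estimate holds for any point $z_t$ in the rectilinear segment $[x_0,z_0]$'': each $z_t$ corresponds to the rescaled parameter $ty_0$, so the inductive step is indeed being invoked at all admissible $y$-values simultaneously.
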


\begin{proof}
Let $z_j=g^j (z_0)$ and $x_j=g^j (x_0)$. We prove the Lemma by induction on $j\geq 0$.
For $j=0$ the result is obvious. Assume the result for $ i \leq j-1$, 
$|y_i|\leq 7/4  \ |y_0| \leq 2D_0 -1$, since
$3D_0\tau \leq 3/4$ and $D_0>4$. 

By the chain rule we have
$$
\log D g^j (z_0) =\sum_{l=0}^{j-1} \log D g (z_l) \ ,
$$
so
\begin{align*}
\left | \log D g^j (z_0) -\log D g^j (x_0) 
\right | &\leq \sum_{l=0}^{j-1} 
\left | \log D g (z_l) -\log D g (x_l) \right | \\
&\leq \tau \sum_{l=0}^{j-1} |z_l -x_l| \\
&\leq  \tau (2D_0-1) \sum_{l=0}^{j-1} |m_n (x_l)| \ .
\end{align*}

Considering the $j$-iterate of $g$ on the interval 
$]x_0, g^{q_n} (x_0)-p_n[$, we obtain a point 
$\zeta \in  ]x_0, g^{q_n} (x_0)-p_n[$ such that,
$$
Dg^j (\zeta ) =\frac {m_n (x_j)}{m_n (x_0)},
$$
and 
$$
\left | \log D g^j (\zeta ) -\log D g^j (x_0) 
\right | \leq \tau |m_n (x_0)|\leq \tau 
\sum_{l=0}^{j-1} |m_n (x_l)|.
$$

Adding the two previous inequalities, we have 
$$
\left | \log D g^j (z_0) -\log \frac {m_n (x_j)} 
{m_n (x_0)} \right | \leq 2 D_0 \tau 
\sum_{l=0}^{j-1} |m_n (x_l)|.
$$
The intervals $]x_l , g^{q_n} (x_l)-p_n [$, $0\leq l < q_{n+1}$  being disjoint modulo $1$, we have 
$$
\sum_{l=0}^{q_{n+1}-1} |m_n (x_l)| < 1 \ .
$$
So we obtain
$$
\left | \log D g^j (z_0) -\log \frac {m_n (x_j)}{ 
m_n (x_0) } \right | \leq 2D_0 \tau,
$$
and taking the exponential (using $|e^w-1|\leq 3/2 |w|$, 
for $|w| <1/2$, since $2D \tau <1/2$),
$$
\left | D g^j (z_0)-\frac {m_n (x_j)}{m_n (x_0) } 
\right | \leq 3 D_0\tau \frac{m_n (x_j)}{ m_n (x_0) }.
$$

This last estimate holds for any point $z_t$ in the 
rectilinear segment $[x_0, z_0]$. Integrating 
along this segment we get the definitive estimate,

$$
\left | g^j (z_0) -g^j (x_0) - i y_0  m_n (x_j) 
\right | \leq 3 D_0 \tau  |y_0|  |m_n (x_j)|.
$$
\end{proof}

\newpage

\section {Hyperbolic Denjoy-Yoccoz Lemma.}

\subsection{Flow interpolation in $\RR$.} Since $g$ is analytic, from Denjoy's Theorem we know that $g_{/\RR}$ 
is topologically linearizable, i.e. there exists an increasing 
homeomorphism $h: \RR \to \RR$, such that for $x\in \RR$,
$h(x+1)=h(x)+1$, and 
$$
h^{-1} \circ g \circ h =T_{\alpha} \ ,
$$
where $T_\alpha : \RR \to \RR$, $x\mapsto x+\alpha $.

We can embed $g$ into a topological flow on the real line $(\varphi_t)_{t\in \RR}$ defined, for $t \in \RR$,  
$\varphi_t =h\circ T_{t\alpha }\circ h^{-1}$. In general, when $g$ is not analytically
linearizable (i.e. $h$ is not analytic), the maps $\varphi_t$ are only  
homeomorphism of the real line, although for $t\in \ZZ$, $\varphi_t$ is analytic since 
for these values they are iterates of $g$.  In some cases for other values 
of $t$, $\varphi_t$ is an analytic diffeomorphism in the analytic centralizer of $g$ 
(see \cite{PM3} for more information on analytic centralizers). 
Now $(\varphi_t)_{t\in [0,1]}$ is an 
isotopy from the identity to $g$. The flow $(\varphi_t)_{t\in \RR}$ is  a one 
parameter subgroup of homeomorphisms of the real line commuting to the 
translation by $1$.

\medskip

\subsection{Flow interpolation in $\CC$.} There are different complex extensions of the flow $(\varphi_t)_{t\in \RR}$
suitable for our purposes. For $n\geq 0$, we can extend this topological 
flow to a topological flow in $\CC$ by defining, for  $z_0 =x_0 +i \, | m_n (x_0)| y_0 \in \CC$, with $x_0, y_0 
\in \RR$, 
$$
\varphi_{t}^{(n)} (z_0)=z_0(t)= \varphi_t(x_0) +i \, |m_n (\varphi_t(x_0))| y_0 \ .
$$
We denote $\Phi^{(n)}_{z_0}$ the flow line passing through $z_0$,
$$
\Phi^{(n)}_{z_0} = (\varphi_{t}^{(n)} (z_0))_{t\in \RR}.
$$

\medskip

\subsection{Hyperbolic Denjoy-Yoccoz Lemma.} We are now ready to give a geometric version of Denjoy-Yoccoz Lemma. We 
denote by $d_P$ the Poincar\'e distance in the upper half plane.

\begin{lemma}[Hyperbolic Denjoy-Yoccoz Lemma] \label{lem_DYhyp}
There exists $\eps_0 >0$ small enough universal constant such that the following holds. Let $4<D_0<\frac{1}{4\eps_0}$.
Let $\D >0$ and $g\in D^{\o} (\TT , \D )$ holomorphic 
and continuous on $\overline {B_{\D}}$ such that $||D \ \log \ Dg ||_{C^0 (\overline {B_{\D}} )} <\epsilon_0$.
Then there exists $n_0\geq 1$ such that for $n>n_0$,  we have for $z_0 \in B_{\D}$, $\Im z_0 >0$, $z_0= x_0+im_{n}(x_0) y_0$, with 
$0<y_0<D_0$, $0\leq j\leq q_{n+1}$,

$$
d_P(g^j(z_0), \varphi^{(n)}_j(z_0)) \leq C_0 \ ,
$$
for some constant $C_0 >0$.
\end{lemma}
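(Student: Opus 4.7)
The plan is to directly apply the (Euclidean) Denjoy--Yoccoz Lemma and convert its output into a hyperbolic estimate by exploiting that $\varphi^{(n)}_j(z_0)$ and $g^j(z_0)$ lie on a common ``vertical slice'' once $j\in\ZZ$, which makes the factor $m_n$ cancel in the Poincar\'e-distance formula.

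First I fix universal constants. Choose $\eps_0 < 1/(4D_0)$, which in particular forces $\eps_0 < 1/16$ since $D_0 > 4$, so that the classical Denjoy--Yoccoz Lemma applies to any $g$ with $\tau := \|D\log Dg\|_{C^0(\overline{B_{\D}})} < \eps_0$. By Denjoy's theorem $M_n \to 0$, so I pick $n_0$ with $M_n \leq \D/(2D_0)$ for all $n \geq n_0$. For such $n$ the classical lemma yields, for $0 \leq j \leq q_{n+1}$,
$$
g^j(z_0) = g^j(x_0) + i\,m_n(g^j(x_0))\,y_j, \qquad |y_j - y_0| \leq 3 D_0 \tau\, y_0.
$$

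Next I identify the flow interpolation. For integer $j$ the flow $\varphi_j$ coincides with $g^j$ on $\RR$; moreover $m_n$ has constant sign on $\RR$, and the assumption $\Im z_0 > 0$ together with $y_0 > 0$ forces $m_n(x_0) > 0$, so this sign is positive throughout. Hence
$$
\varphi^{(n)}_j(z_0) = g^j(x_0) + i\, m_n(g^j(x_0))\, y_0.
$$

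Finally I compare $p := g^j(z_0)$ and $q := \varphi^{(n)}_j(z_0)$. Writing $m := m_n(g^j(x_0)) > 0$,
$$
p - q = i m\,(y_j - y_0), \qquad \Im p = m\,\Re y_j, \qquad \Im q = m\, y_0,
$$
so that the factor $m$ cancels in the standard formula
$$
d_P(p,q) = \operatorname{arccosh}\!\Bigl(1 + \tfrac{|p-q|^2}{2\Im p\,\Im q}\Bigr),
$$
leaving
$$
\frac{|p-q|^2}{\Im p\,\Im q} = \frac{|y_j - y_0|^2}{y_0\,\Re y_j} \leq \frac{(3 D_0 \tau)^2}{1 - 3 D_0 \tau}.
$$
Since $D_0 \tau < D_0 \eps_0 < 1/4$, the right-hand side is bounded by a universal constant $K < 9/4$, giving $d_P(p,q) \leq C_0 := \operatorname{arccosh}(1 + K/2)$.

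The only delicate point is some bookkeeping: checking that $\Re y_j > 0$ so that $p$ still lies in the upper half-plane (this follows from $|y_j - y_0| < y_0$), and verifying that $m_n$ has constant positive sign under the hypotheses. The structural content of the argument is the cancellation of $m_n$ in numerator and denominator: this is exactly why the Poincar\'e metric is the natural scale on which Denjoy--Yoccoz's drift $3 D_0 \tau\, y_0$---which in Euclidean terms shrinks with $m_n$---becomes a uniformly bounded perturbation.
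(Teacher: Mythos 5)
Your proof is correct and takes essentially the same route as the paper: apply the Euclidean Denjoy--Yoccoz Lemma, observe that the factor $m_n(g^j(x_0))$ cancels when the Euclidean displacement is measured against the Poincar\'e scale, and conclude with a universal bound. The only (cosmetic) difference is that the paper estimates $d_P$ by integrating $\frac{|d\xi|}{\Im\xi}$ along the vertical segment joining the two points and bounding $\inf \Im\xi$ from below via $\Re y_j \geq \tfrac14 y_0$, whereas you invoke the closed-form $d_P(p,q)=\operatorname{arccosh}\bigl(1+\tfrac{|p-q|^2}{2\Im p\,\Im q}\bigr)$ and make the cancellation explicit there; the two are interchangeable and both rest on the same inequality $|y_j-y_0|\leq 3D_0\tau\,y_0 < \tfrac34 y_0$, and your discussion of the sign of $m_n$ and the positivity of $\Re y_j$ correctly fills in the bookkeeping.
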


\begin{proof}
 Since $M_n\to 0$, we choose $n_0\geq 1$ big enough so that for $n \geq n_0$ we have 
 $$
 \frac{\D}{2M_{n}} \leq D_0 < \frac{1}{4\eps_0} \ ,
 $$
 so we can use the Denjoy-Yoccoz lemma in the previous section. The Poincar\'e metric in the 
 upper half plane is given by
 $$
 |ds | =\frac{|d\xi|}{\Im \xi} \ .
 $$
 Therefore 
 \begin{align*} 
 d_P(z_j , \varphi_j^{(n)}(z_0)) \leq \int_{[z_j , \varphi_j^{(n)}(z_0)]} \frac{|d\xi|}{\Im \xi} &\leq |m_{n}(x_j)|\, |y_j-y_0| 
 \, \frac{1}{\inf_{\xi \in [z_j , \varphi_j^{(n)}(z_0)]} \Im \xi} \\
 &\leq |m_{n}(x_j)| \, |y_j-y_0| \, \frac{4}{|m_n(x_j)| \, y_0} \\
 &\leq 4\, \frac{|y_j-y_0|}{y_0} \leq 3=C_0
 \end{align*}
 where in the first line we used that $\Re y_j \geq \frac{1}{4} y_0$ which follow from $|y_j-y_0|\leq \frac{3}{4} y_0$
 that we also used in the last inequality.
\end{proof}

\section{Quasi-invariant curves.}

We prove now that the flow lines $\Phi^{(n)}_{z_0}$, with $y_0 > 1/2$ and $n\geq n_0$ for $n_0\geq 1$ large enough, are 
quasi-invariant curves. These flow lines are graphs over $\RR$. Given 
an interval $I\subset \RR$, we label $\tilde I^{(n)}$ the piece of $\Phi^{(n)}_{z_0}$ over $I$.

\medskip

\begin{figure}[ht]
\centering
\resizebox{6cm}{!}{\includegraphics{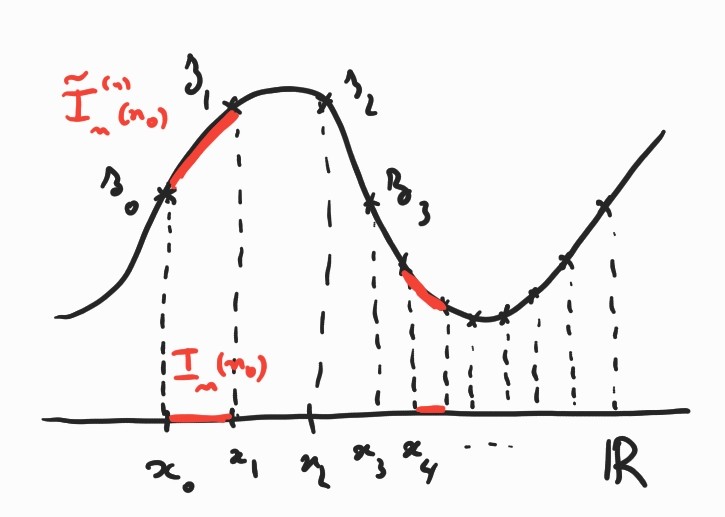}}    
\caption{A quasi-invariant curve.}
\end{figure}

\medskip

\begin{lemma}\label{lem_bounded}
 There is $n_0\geq 1$  such that for $n\geq n_0$ and for  any $x\in \RR$, the piece $\tilde I_n^{(n)}(x)$  has bounded Poincar\'e diameter.
\end{lemma}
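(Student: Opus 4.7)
The plan is to parametrize the piece $\tilde I_n^{(n)}(x)$ as the graph of its imaginary part over its real part, and then bound its Poincar\'e length, which majorizes the Poincar\'e diameter. By the very definition of $\varphi^{(n)}_t$, the flow line $\Phi^{(n)}_{z_0}$ is the graph $\{\zeta + i|m_n(\zeta)|y_0 : \zeta \in \RR\}$, so the piece sitting over $I_n(x) = [x, g_n(x)]$ admits the natural parametrization
$$
\gamma(\zeta) = \zeta + i|m_n(\zeta)|y_0, \qquad \zeta \in I_n(x).
$$

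First, I would control the Euclidean speed of $\gamma$. Since $\frac{d}{d\zeta}|m_n(\zeta)| = \pm(Dg_n(\zeta)-1)$, Corollary \ref{cor_1} yields $|\gamma'(\zeta)| \leq 1 + y_0 \eps$ once $n \geq n_0$ is chosen large enough to make $||Dg_n - 1||_{C^0(\RR)} \leq \eps$ for any preassigned small $\eps > 0$. Next, I would control the imaginary part from below: Corollary \ref{cor_estimate} gives $|m_n(\zeta)| \geq (1-\eps)|m_n(x)|$ for every $\zeta \in I_n(x)$, and therefore
$$
\Im \gamma(\zeta) = |m_n(\zeta)|\, y_0 \geq (1-\eps)|m_n(x)|\, y_0.
$$

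Integrating the Poincar\'e density $|dz|/\Im z$ along $\gamma$ and using $|I_n(x)| = |m_n(x)|$, I would conclude
$$
\mathrm{length}_P(\tilde I_n^{(n)}(x)) \leq \int_{I_n(x)} \frac{|\gamma'(\zeta)|}{\Im \gamma(\zeta)}\, d\zeta \leq \frac{|m_n(x)|(1 + y_0 \eps)}{(1-\eps)|m_n(x)|\, y_0} = \frac{1+y_0\eps}{(1-\eps)y_0}.
$$
For $y_0 > 1/2$ fixed (more generally, for $y_0$ in any range bounded away from $0$) and $\eps$ small, this bound is independent of both $x$ and $n \geq n_0$, yielding a uniform bound on the Poincar\'e diameter.

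The mechanism at work is that the smallness of the non-linearity forces the graph of $|m_n|\, y_0$ to be nearly flat with height comparable to $|m_n(x)|\, y_0$, precisely the scale of the base interval $I_n(x) \subset \RR$; so the integral of $|dz|/\Im z$ over $I_n(x)$ is of order one. No essential obstacle arises beyond choosing $n_0$ large enough that Corollaries \ref{cor_1} and \ref{cor_estimate} apply simultaneously with a uniformly small $\eps$.
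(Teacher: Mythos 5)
Your proposal is correct and follows essentially the same route as the paper: parametrize $\tilde I_n^{(n)}(x)$ as a graph over $I_n(x)$, bound the Euclidean speed via Corollary \ref{cor_1}, bound the imaginary part from below via Corollary \ref{cor_estimate}, and integrate the Poincar\'e density to get a uniform length bound of the form $\frac{1}{y_0}\cdot\frac{1+O(\eps)}{1-\eps}$. The only cosmetic difference is that you keep the $y_0$-dependence explicit in the derivative bound ($1+y_0\eps$) rather than absorbing $y_0\le D_0$ into the choice of $n_0$ as the paper does.
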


\begin{proof}
Let $z=x+i \, |m_n(x)| y_0$ be the current point in  $\tilde I_n^{(n)}(x)$. We have
$$
dz=\left ( 1 \pm i\, (Dg_n(x)-1) y_0 \right ) \, dx \ .
$$
For any $\eps_0 >0$,  choosing $n_0\geq 1$ large enough, for $n\geq n_0$, according to Lemma \ref{cor_1} we have
$$
\left |\frac{dz}{dx} -1 \right | \leq \eps_0 \ .
$$
Therefore, we have
$$
l_P (\tilde I_n^{(n)}(x_0))=\int_{\tilde I_n^{(n)}(x_0)} \frac{1}{|m_n(x)|\, y_0} \, |dz| \leq \int_{I_n(x_0)} \frac{1}{|m_n(x)|\, y_0} \, (1+\eps_0 )\, dx  \ .
$$
Now using Lemma \ref{cor_estimate} with $\eps= \eps_0$ and increasing $n_0$ if necessary, we have
$$
l_P (\tilde I_n^{(n)}(x))\leq \int_{I_n(x_0)} \frac{1}{|m_n(x_0)|\, y_0} \, \frac{1+\eps_0}{1-\eps_0} \, dx \leq \frac{1}{y_0} \frac{1+\eps_0}{1-\eps_0} 
\leq 2 \, \frac{1+\eps_0}{1-\eps_0}\leq C\ .
$$

\end{proof}

We assume $n\geq n_0$ from now on.

\begin{lemma}\label{lem_cover}
 For  $0\leq j < q_{n+1}$ and any $x\in \RR$, the pieces $(g^j\circ T^k(\tilde J_n^{(n)}(x)))_{0\leq j\leq q_{n+1}, k\in \ZZ}$ have bounded Poincar\'e 
 diameter and cover $\Phi^{(n)}_{z_0}$.
\end{lemma}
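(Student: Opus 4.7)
The strategy is to factor the proof through the model provided by the flow $\varphi^{(n)}$, which literally preserves $\Phi_{z_0}^{(n)}$, and then transfer the conclusions to the holomorphic iterates $g^j\circ T^k$ by means of the Hyperbolic Denjoy--Yoccoz Lemma (Lemma \ref{lem_DYhyp}). The key identity is that, for integer $j$ and $k$,
$$
\varphi_j^{(n)}\circ T^k(\tilde J_n^{(n)}(x)) \;=\; \tilde J_n^{(n)}(g^j(x)+k),
$$
because the flow coincides with $g^j$ on $\RR$ at integer times, commutes with $T^k$, and preserves the graph structure of $\Phi_{z_0}^{(n)}$ over $\RR$; moreover $J_n(g^j(x)+k)=g^j(J_n(x))+k$ since $g_n=g^{q_n}-p_n$ commutes with $g$ and with $T$.

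\textbf{Bounded Poincar\'e diameter.} First, I would extend Lemma \ref{lem_bounded} from $\tilde I_n^{(n)}$ to $\tilde J_n^{(n)}$: writing $\tilde J_n^{(n)}(x)=\tilde I_n^{(n)}(g_n^{-1}(x))\cup \tilde I_n^{(n)}(x)$, the two pieces share the flow-line point over $x$ and each has Poincar\'e length at most the uniform constant $C$ of Lemma \ref{lem_bounded}, so $\tilde J_n^{(n)}(x)$ has Poincar\'e diameter $\leq 2C$. Applying this to the base point $g^j(x)+k$ in place of $x$ shows that the flow piece $\varphi_j^{(n)}\circ T^k(\tilde J_n^{(n)}(x))=\tilde J_n^{(n)}(g^j(x)+k)$ has Poincar\'e diameter $\leq 2C$ as well, uniformly in $j,k$. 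By Lemma \ref{lem_DYhyp}, for every $w\in \tilde J_n^{(n)}(x)$ one has $d_P(g^j\circ T^k(w),\varphi_j^{(n)}\circ T^k(w))\leq C_0$, so the triangle inequality yields
$$
\mathrm{diam}_P\bigl(g^j\circ T^k(\tilde J_n^{(n)}(x))\bigr)\;\leq\;2C_0+2C,
$$
uniformly for $0\leq j<q_{n+1}$ and $k\in\ZZ$.

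\textbf{Covering.} By Lemma \ref{lem_comb}, the intervals $g^j\circ T^k(J_n(x))$ for $0\leq j<q_{n+1}$, $k\in\ZZ$, cover $\RR$ (at most twice). Since $\Phi_{z_0}^{(n)}$ is a graph over $\RR$, the flow pieces $\tilde J_n^{(n)}(g^j(x)+k)$ cover $\Phi_{z_0}^{(n)}$ at most twice. Combined with the diameter estimate above, this gives the covering of $\Phi_{z_0}^{(n)}$ by the holomorphic pieces $g^j\circ T^k(\tilde J_n^{(n)}(x))$, up to a uniformly bounded Poincar\'e displacement: every $\zeta\in\Phi_{z_0}^{(n)}$ lies within Poincar\'e distance $C_0$ of at least one such piece.

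\textbf{Main obstacle.} The subtlety is that the holomorphic iterates do not preserve the flow line $\Phi_{z_0}^{(n)}$, so the covering is really a two-step affair: a genuine set-theoretic cover of $\Phi_{z_0}^{(n)}$ by the flow pieces, overlaid with the uniform Poincar\'e approximation of the flow pieces by the holomorphic pieces. The whole argument hinges on the Hyperbolic Denjoy--Yoccoz Lemma controlling this deviation uniformly for $0\leq j\leq q_{n+1}$; once that is in hand, the combinatorial input of Lemma \ref{lem_comb} takes care of the rest.
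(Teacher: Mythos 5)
Your reading of $g^j\circ T^k\bigl(\tilde J_n^{(n)}(x)\bigr)$ as the holomorphic image of the flow piece is a legitimate, more literal reading of the statement, but it is not what the paper's own proof does. The paper's proof immediately switches to the pieces $\tilde J^{(n)}_n\bigl(g^j\circ T^k(x)\bigr)$, using the real identity $g^j\circ T^k(J_n(x))=J_n(g^j\circ T^k(x))$ and the definition of $\tilde{\,\cdot\,}^{(n)}$ as ``the piece of $\Phi^{(n)}_{z_0}$ over the interval.'' In other words the paper treats these as \emph{flow pieces}, which are genuine subsets of $\Phi^{(n)}_{z_0}$: this is what makes ``cover $\Phi^{(n)}_{z_0}$'' literally true, and it feeds directly into Corollary~\ref{cor_dense}, where the $C_0$-density of the flow orbit only requires that the flow pieces cover the flow line with bounded diameter. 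With that reading, the proof needs only Lemma~\ref{lem_bounded}, the commutation identity, and Lemma~\ref{lem_comb}; the Hyperbolic Denjoy--Yoccoz Lemma is \emph{not} used here, but rather later in the Proposition where the holomorphic orbit is compared to the flow orbit. Your version, by contrast, proves a slightly different and weaker statement — the holomorphic images have bounded Poincar\'e diameter and form a $C_0$-approximate cover of $\Phi^{(n)}_{z_0}$ — which imports Lemma~\ref{lem_DYhyp} already at this stage. That is correct mathematics and you identify precisely the right subtlety (the holomorphic iterates do not preserve the flow line), but it departs from the paper's economy: the paper keeps flow-model facts and holomorphic-approximation facts in separate compartments, invoking the Hyperbolic Denjoy--Yoccoz Lemma only once at the end. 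Your reduction of $\tilde J_n^{(n)}$ to two copies of $\tilde I_n^{(n)}$ sharing the point over $x$ matches the paper, and the identity $\varphi_j^{(n)}\circ T^k\bigl(\tilde J_n^{(n)}(x)\bigr)=\tilde J_n^{(n)}(g^j(x)+k)$ you state is exactly the lift of the paper's identity $g^j\circ T^k(J_n(x))=J_n(g^j\circ T^k(x))$ to the flow line; had you simply stopped there, you would have reproduced the paper's argument.
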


\begin{proof}
 From Lemma \ref{lem_bounded} any $\tilde I^{(n)}_n(x)$ has bounded Poincar\'e diameter, thus also any
 $\tilde J^{(n)}_n(x)= \tilde I^{(n)}_n(x) \cup \tilde I^{(n)}_n(g_n^{-1}(x))$. Moreover, 
 we have $g^j\circ T^k(J_n(x))=J_n(g^j\circ T^k(x))$, and all $\tilde J^{(n)}_n(g^j\circ T^k(x)) $ 
 have also bounded Poincar\'e diameter. 
 From Lemma \ref{lem_comb} these pieces cover $\Phi^{(n)}_{z_0}$.
\end{proof}

\begin{corollary}\label{cor_dense}
 For some $C_0>0$, the flow orbit $(\varphi_{j,k}^{(n)} (z_0))_{0\leq j< q_{n+1}, k\in \ZZ}$ is $C_0$-dense in $\Phi^{(n)}_{z_0}$ for the Poincar\'e metric.
\end{corollary}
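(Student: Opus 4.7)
The plan is to deduce the corollary directly from Lemma \ref{lem_cover}. That lemma already covers the flow line $\Phi^{(n)}_{z_0}$ by arcs $\tilde J_n^{(n)}(g^j\circ T^k(x_0))$ of uniformly bounded Poincar\'e diameter, so it is enough to place one of the prescribed orbit points inside each such arc; the diameter bound will then yield the $C_0$-density.

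\emph{Step 1 (orbit points lie on the flow line).} Since $|m_n|$ is $1$-periodic and $\varphi_t$ commutes with $T$ on $\RR$, the complex extension $\varphi^{(n)}_t$ also commutes with $T$, and $T^k(z_0)=\varphi^{(n)}_{s_k}(z_0)$ for the unique $s_k\in\RR$ with $\varphi_{s_k}(x_0)=x_0+k$. Hence $\varphi^{(n)}_j\circ T^k(z_0)=\varphi^{(n)}_{j+s_k}(z_0)$ belongs to $\Phi^{(n)}_{z_0}$, and since $\varphi_1=g$ on $\RR$, this point projects to $g^j\circ T^k(x_0)$, which is an interior point of $J_n(g^j\circ T^k(x_0))$.

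\emph{Step 2 (density).} Let $p\in\Phi^{(n)}_{z_0}$ and let $x$ be its real projection. By Lemma \ref{lem_comb} there exist $0\leq j<q_{n+1}$ and $k\in\ZZ$ with $x\in J_n(g^j\circ T^k(x_0))$, so both $p$ and the orbit point $\varphi^{(n)}_j\circ T^k(z_0)$ lie in the same lifted arc $\tilde J_n^{(n)}(g^j\circ T^k(x_0))$. By Lemma \ref{lem_bounded} (applied to each of the two $\tilde I_n^{(n)}$ pieces making up $\tilde J_n^{(n)}$) this arc has Poincar\'e diameter bounded by some universal $C_0>0$, whence $d_P\bigl(p,\varphi^{(n)}_j\circ T^k(z_0)\bigr)\leq C_0$, which is the claimed $C_0$-density.

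The main (non-)obstacle is simply the bookkeeping that matches the discrete orbit point $\varphi^{(n)}_j\circ T^k(z_0)$ to a marked point of the corresponding arc of the cover; once this is in place the statement is an immediate consequence of Lemmas \ref{lem_comb}, \ref{lem_bounded} and \ref{lem_cover}. No new analytic estimate is needed, so the proof is essentially a one-line consequence of the covering established in the previous lemma.
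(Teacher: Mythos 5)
Your argument is correct and coincides with what the paper leaves implicit: Corollary~\ref{cor_dense} is stated with no proof because it is an immediate consequence of Lemma~\ref{lem_cover}, which already gives that the arcs $\tilde J_n^{(n)}(g^j\circ T^k(x_0))$ have uniformly bounded Poincar\'e diameter and cover $\Phi^{(n)}_{z_0}$, and each such arc contains the corresponding orbit point $\varphi_{j,k}^{(n)}(z_0)$ (which projects to its center $g^j\circ T^k(x_0)$). Your Steps 1 and 2 simply make this bookkeeping explicit, so the proposal matches the paper's intended route.
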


We prove the first property stated in Theorem \ref{thm_quasi}:

\begin{proposition} Let $\gamma_n =\Phi^{(n-1)}_{z_0}$  for some $z_0$ from the previous lemma, then we have, for $0\leq j\leq q_n$,
$$
\cD_P(g^j(\gamma_n), \gamma_n)\leq 2C_0
$$
\end{proposition}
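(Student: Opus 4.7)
The plan is to deduce the claim directly from the Hyperbolic Denjoy-Yoccoz Lemma (Lemma \ref{lem_DYhyp}) applied with flow index $n-1$. The key observation is that $\gamma_n = \Phi^{(n-1)}_{z_0}$ is, by definition, a flow line of the topological flow $\varphi^{(n-1)}$, hence invariant under it: for every $t \in \RR$, $\varphi^{(n-1)}_t(\gamma_n) = \gamma_n$ as a set. Moreover, every point $w \in \gamma_n$ can be written as $w = \varphi_s(x_0) + i|m_{n-1}(\varphi_s(x_0))| y_0$ for some $s \in \RR$, so it satisfies the parametric form required by the hypothesis of Lemma \ref{lem_DYhyp}, with the same $y_0 \in (1/2, D_0)$ as $z_0$.

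To bound $\sup_{w \in \gamma_n} d_P(g^j(w), \gamma_n)$, I would fix $w \in \gamma_n$ and $0 \le j \le q_n = q_{(n-1)+1}$. Lemma \ref{lem_DYhyp} yields $d_P(g^j(w), \varphi^{(n-1)}_j(w)) \le C_0$, and since $\varphi^{(n-1)}_j(w) \in \gamma_n$ by flow invariance, we get $d_P(g^j(w), \gamma_n) \le C_0$. For the reverse direction, given $w' \in \gamma_n$, set $w := \varphi^{(n-1)}_{-j}(w')$, which again belongs to $\gamma_n$ because the flow acts on the flow line; then $\varphi^{(n-1)}_j(w) = w'$, and Lemma \ref{lem_DYhyp} applied at $w$ gives $d_P(g^j(w), w') \le C_0$, so that $d_P(w', g^j(\gamma_n)) \le C_0$. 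Taking suprema on both sides yields
$$
\cD_P(g^j(\gamma_n), \gamma_n) \le C_0 \le 2C_0.
$$

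The only subtle point — and not really an obstacle — is matching of indices: the curve is built as a flow line of $\varphi^{(n-1)}$ rather than $\varphi^{(n)}$, precisely so that the iterate range $0 \le j \le q_n$ appearing in the proposition coincides with the range $0 \le j \le q_{m+1}$ for which Lemma \ref{lem_DYhyp} applies, with $m = n-1$. The estimate obtained is in fact the sharper $C_0$; the looser constant $2C_0$ in the statement is already consistent with this and likely chosen with room to spare for subsequent applications.
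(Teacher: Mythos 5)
Your proof is correct, and it is a cleaner route than the paper's. The paper establishes the inclusion $\gamma_n \subset N_{2C_0}(g^j(\gamma_n))$ by passing through Corollary~\ref{cor_dense} (density of the discrete flow orbit in the flow line), combining the $C_0$-density of the flow orbit with the $C_0$-closeness of dynamical and flow orbits, which is why the constant in the statement is $2C_0$. You instead exploit the group property of the flow directly: applying the Hyperbolic Denjoy--Yoccoz Lemma at the point $w=\varphi^{(n-1)}_{-j}(w')$ (which is legitimate, since all points of $\Phi^{(n-1)}_{z_0}$ have the parametric form $x+i\,|m_{n-1}(x)|\,y_0$ with the same $y_0\in(1/2,D_0)$ required by the lemma) yields $d_P(g^j(w),w')\le C_0$ with $w\in\gamma_n$, giving the backward inclusion with constant $C_0$ rather than $2C_0$. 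The forward inclusion is handled identically in both proofs. So you use the same key lemma as the paper but bypass the density corollary, obtaining the sharper bound $\cD_P(g^j(\gamma_n),\gamma_n)\le C_0$; the paper's $2C_0$ is simply a consequence of its less direct argument for one of the two inclusions.
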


\begin{proof} We prove this Proposition for $n+1$ instead of $n$ (the proposition is stated to match $n$ in Theorem \ref{thm_quasi}).
It follows from the hyperbolic Denjoy-Yoccoz Lemma that the orbit $(g^j\circ T^k (z_0))_{0\leq j<q_{n+1}, k\in \ZZ}$ is $C_0$-close to flow orbit 
$(\varphi_{j,k}^{(n)} (z_0))_{0\leq j<q_{n+1}, k\in \ZZ}$, and from Corollary \ref{cor_dense} we have that a $2C_0$-neighborhood of $g^j(\gamma_{n+1})$ 
contains $\gamma_{n+1}$. Conversely, since we can chooose any $z_0 \in \gamma_{n+1}$, we also have that $g^j(\gamma_{n+1})$ is in a $C_0$-neighborhood
of $\gamma_{n+1}$.
\end{proof}

We prove the second property of Theorem \ref{thm_quasi}. We observe that $g^{q_{n+1}}(z_0) \in \tilde J_n^{(n)}(x_0)$, that $z_0 \in \tilde J_n^{(n)}(x_0)$,
and that $\tilde J_n^{(n)}(x_0)$ has a bounded Poincar\'e diameter by Lemma \ref{lem_cover}. Thus we get (taking a larger $C_0 >0$ if necessary):

\begin{proposition} For any $z_0\in  \Phi^{(n)}$ , we have 
$$
 d_P(z_0, g^{q_{n+1}}(z_0)) \leq C_0 \ .
$$
 \end{proposition}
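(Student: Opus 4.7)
The plan is to reduce the bound to the Poincar\'e diameter control of Lemma \ref{lem_cover} via two approximations, using the flow interpolation as an intermediate object. First, I apply the Hyperbolic Denjoy-Yoccoz Lemma \ref{lem_DYhyp} at the index $j=q_{n+1}$, which immediately gives
\[
d_P\bigl(g^{q_{n+1}}(z_0),\,\varphi^{(n)}_{q_{n+1}}(z_0)\bigr)\leq C_0.
\]
By construction, $\varphi^{(n)}_{q_{n+1}}(z_0)$ lies on the flow line $\Phi^{(n)}_{z_0}$, so it is enough to bound the Poincar\'e distance, on the same flow line, between $z_0$ and $\varphi^{(n)}_{q_{n+1}}(z_0)$.

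Second, I unwind where this flow point sits. The $x$-coordinate of $\varphi^{(n)}_{q_{n+1}}(z_0)$ is $g^{q_{n+1}}(x_0)=g_{n+1}(x_0)+p_{n+1}$. Because $d_P$ is the Poincar\'e distance on $\CC-\overline{\DD}$, the integer translation by $p_{n+1}$ descends to an isometry, so I may replace $\varphi^{(n)}_{q_{n+1}}(z_0)$ by $\varphi^{(n)}_{q_{n+1}}(z_0)-p_{n+1}$, a point of $x$-coordinate $g_{n+1}(x_0)$. The continued fraction combinatorics give $|m_{n+1}(x_0)|<|m_n(x_0)|$ with signs alternating between consecutive indices, hence $g_{n+1}(x_0)\in J_n(x_0)=[g_n^{-1}(x_0),g_n(x_0)]$. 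Therefore, on the cylinder, both $z_0$ (above $x_0\in J_n(x_0)$) and the image of $\varphi^{(n)}_{q_{n+1}}(z_0)$ (above $g_{n+1}(x_0)\in J_n(x_0)$) project into the piece $\tilde J_n^{(n)}(x_0)$.

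Third, Lemma \ref{lem_bounded} (and so Lemma \ref{lem_cover}) bounds the Poincar\'e diameter of $\tilde J_n^{(n)}(x_0)$ by a universal constant, giving $d_P(z_0,\varphi^{(n)}_{q_{n+1}}(z_0))\leq C$. Combining with the Denjoy-Yoccoz comparison above and absorbing the constants into a single $C_0$ yields the stated inequality.

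The only real issue is bookkeeping: one must carefully distinguish the Poincar\'e distance in the universal cover (the upper half-plane, where the Hyperbolic Denjoy-Yoccoz Lemma lives and in which $g^{q_{n+1}}(z_0)$ is far from $z_0$) from the Poincar\'e distance on $\CC-\overline{\DD}$ (the quotient, which is what $d_P$ in Theorem \ref{thm_quasi} refers to), and verify that the integer translate $p_{n+1}$ is absorbed correctly by the covering projection $E$. Once this is straightened out, no further estimate is needed—everything is already built into the previous lemmas.
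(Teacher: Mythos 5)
Your proof is correct and follows the same route as the paper's one-line argument, merely making explicit what the paper leaves implicit. The paper observes that $g^{q_{n+1}}(z_0) \in \tilde J_n^{(n)}(x_0)$, that $z_0\in\tilde J_n^{(n)}(x_0)$, and invokes the bounded Poincar\'e diameter of $\tilde J_n^{(n)}(x_0)$ from Lemma~\ref{lem_cover}. Read literally the first observation is not exact, since $g^{q_{n+1}}(z_0)$ is not on the flow line; what is meant, and what you correctly supply, is the chain: Hyperbolic Denjoy--Yoccoz (Lemma~\ref{lem_DYhyp}) with $j=q_{n+1}$ places $g^{q_{n+1}}(z_0)$ at distance $\leq C_0$ from $\varphi^{(n)}_{q_{n+1}}(z_0)$; the combinatorics of the continued fraction give $g_{n+1}(x_0)\in J_n(x_0)$, so after removing the integer translate $p_{n+1}$ (an isometry both of the half-plane and of the quotient $\CC-\overline{\DD}$, under which the two points become identified in the quotient) the flow point lands in $\tilde J_n^{(n)}(x_0)$ alongside $z_0$; Lemma~\ref{lem_bounded} then bounds their separation. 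Your remark about distinguishing the half-plane metric from the quotient metric on $\CC-\overline{\DD}$ is also pertinent: the statement only makes sense in the quotient (in the universal cover the two points are $\sim p_{n+1}$ apart horizontally at a tiny height, hence hyperbolically far), and you handle this correctly by noting that projection is distance non-increasing and absorbs the $p_{n+1}$.
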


From Lemma \ref{lem_cover} we also get the property that the hyperbolic balls 
$B_P(\varphi^{(n)}_{t+k}(z_0), C_0)$ cover $\Phi^{(n)}_{z_0}$.

\begin{proposition} We have that 
$$
U_n=\bigcup_{0\leq j< q_{n+1}, k\in \ZZ} B_P(\varphi^{(n)}_{t+k}(z_0), C_0)
$$
is a neighborhood of the flow line $\Phi^{(n)}_{z_0}$
\end{proposition}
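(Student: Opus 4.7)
The plan is to read the proposition as an essentially topological consequence of the covering property noted just above its statement, promoted to an open neighborhood statement by observing that Poincar\'e balls are open in the usual topology of the upper half-plane. So the work has two distinct parts: \emph{openness} and \emph{covering}.

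First, I would verify openness. On $\{\Im z >0\}$ the Poincar\'e metric $|ds|=|d\xi|/\Im\xi$ is locally bi-Lipschitz equivalent to the Euclidean metric, so each hyperbolic ball $B_P(\varphi^{(n)}_{j+k}(z_0),C_0)$ is an open subset of $\CC$. Hence $U_n$, being a union of open sets, is open in $\CC$. It therefore suffices to check $\Phi^{(n)}_{z_0}\subset U_n$ in order to conclude that $U_n$ is a neighborhood of the flow line.

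Second, I would establish the covering $\Phi^{(n)}_{z_0}\subset U_n$. By Lemma \ref{lem_cover}, the pieces $g^j\circ T^k(\tilde J_n^{(n)}(x_0))$, for $0\le j<q_{n+1}$ and $k\in\ZZ$, have uniformly bounded Poincar\'e diameter and their union covers $\Phi^{(n)}_{z_0}$. Combining this with the Hyperbolic Denjoy-Yoccoz Lemma \ref{lem_DYhyp}, each such piece lies within Poincar\'e distance $C_0$ (after enlarging $C_0$ if needed) of the corresponding flow point $\varphi^{(n)}_{j+k}(z_0)$, since the discrete iterate $g^j\circ T^k(z_0)$ is $C_0$-close in $d_P$ to $\varphi^{(n)}_{j+k}(z_0)$ and the piece itself has bounded $d_P$-diameter around this discrete iterate. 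Therefore every point of $\Phi^{(n)}_{z_0}$ lies in some ball $B_P(\varphi^{(n)}_{j+k}(z_0),C_0)$, which is the required inclusion.

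There is no real obstacle beyond bookkeeping; the only thing to be careful about is the choice of the constant $C_0$. It must be taken large enough to simultaneously absorb (i) the bound from the hyperbolic Denjoy-Yoccoz Lemma, (ii) the uniform Poincar\'e diameter of the pieces $\tilde J_n^{(n)}$ from Lemma \ref{lem_bounded}, and (iii) the triangle inequality combining the two. Once this common $C_0$ is fixed, the argument above gives $\Phi^{(n)}_{z_0}\subset U_n$ and openness is automatic, so $U_n$ is an open neighborhood of the flow line, as claimed.
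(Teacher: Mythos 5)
Your two-part structure (open balls plus covering) is exactly what the paper's statement rests on, and your openness step is fine. The only real issue is the covering step, where you take an unnecessary and slightly backwards detour through the Hyperbolic Denjoy--Yoccoz Lemma. The balls in $U_n$ are centered at the \emph{flow} points $\varphi^{(n)}_{j+k}(z_0)$, which already lie on the flow line $\Phi^{(n)}_{z_0}$ by construction; the piece $\tilde J_n^{(n)}(g^j\circ T^k(x_0))$ is a segment \emph{of the flow line} that contains the flow point $\varphi^{(n)}_{j+k}(z_0)$ (since $g^j\circ T^k(x_0)\in J_n(g^j\circ T^k(x_0))$), not a piece ``around the discrete iterate'' $g^j\circ T^k(z_0)$, which in general does not lie on $\Phi^{(n)}_{z_0}$ at all. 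So the clean argument is: by Lemma \ref{lem_cover} the pieces cover $\Phi^{(n)}_{z_0}$ and each piece has bounded $d_P$-diameter and contains its flow point, hence is contained in $B_P(\varphi^{(n)}_{j+k}(z_0),C_0)$ for suitable $C_0$; this is precisely Corollary \ref{cor_dense}, which is the fact the paper invokes just before stating the proposition. The Hyperbolic Denjoy--Yoccoz Lemma is what relates the dynamical iterate to the flow orbit, and it is needed for the earlier proposition on $\cD_P(g^j(\gamma_n),\gamma_n)$, but not here. Dropping that sentence makes your argument both shorter and correct; as written it introduces a claim (``the piece has bounded diameter around the discrete iterate'') that is not established and not needed.
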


Enlarging the constant $C_0$, we can construct a ``transient annulus'', i.e. any orbit from the outside of $\gamma_n$ that has iterates in between $\gamma_n$ and the circle 
$\SS^1$ must visit $C_0$-close any point of $\gamma_n$.

\begin{proposition} Let $(g^j(z))_{0\leq j\leq q_{n+1}}$ be an orbit that starts on a point $z$ exterior to $\gamma_n$ and has some iterate in 
between $\SS^1$ and $\gamma_n$. Then for any $w\in \gamma_n$, there is an iterate $g^j(z)$ such that
$$
d_P(g^j(z),w) \leq C_0 \ .
$$
\end{proposition}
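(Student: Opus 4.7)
The plan is to apply the hyperbolic Denjoy-Yoccoz Lemma (Lemma~\ref{lem_DYhyp}) to the starting point $z$ itself, use the hypothesis that the orbit enters the annulus to force the flow line $\Phi^{(n)}_z$ to be Poincar\'e-close to $\gamma_n$, and then conclude via the density statement in Corollary~\ref{cor_dense}.

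First I would write $z=x+i|m_n(x)|y'$ with $y'>y_0$, where $y_0>1/2$ is the height parameter of $\gamma_n=\Phi^{(n)}_{z_0}$. A preliminary remark is that the hypothesis forces $y'<D_0$: a starting point whose height exceeds $D_0$ cannot descend below $\gamma_n$ in $q_{n+1}$ iterates, because the real non-linearity estimates of Section~2 multiply vertical scales by bounded factors per step. Under this range condition Lemma~\ref{lem_DYhyp} applies and yields
$$
d_P(g^j(z),\varphi^{(n)}_j(z))\leq C_0,\qquad 0\leq j\leq q_{n+1}.
$$

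The central step is to show that $\Phi^{(n)}_z$ and $\gamma_n$ lie within a universal Poincar\'e distance. Let $g^{j_0}(z)$ be the iterate in the annulus below $\gamma_n$ guaranteed by the hypothesis, so $\Im g^{j_0}(z)\leq y_0|m_n(\Re g^{j_0}(z))|$, while $\Im\varphi^{(n)}_{j_0}(z)=y'|m_n(g^{j_0}(x))|$. The classical lower bound $d_P(p,q)\geq|\log(\Im p/\Im q)|$ on the upper half plane, combined with the Denjoy-Yoccoz estimate, gives $y'|m_n(g^{j_0}(x))|\leq e^{C_0}\,y_0|m_n(\Re g^{j_0}(z))|$, and Corollary~\ref{cor_estimate} renders the two values of $|m_n|$ comparable. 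Hence $y'/y_0$ is bounded by a universal constant, so the parallel flow lines $\Phi^{(n)}_z$ and $\gamma_n$ sit within some universal Poincar\'e distance of each other, which we reabsorb into $C_0$.

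To finish, given $w\in\gamma_n$ I would let $w'\in\Phi^{(n)}_z$ be the point with the same real coordinate, so that $d_P(w,w')\leq C_0$ by the previous paragraph. Corollary~\ref{cor_dense} furnishes an index $j$ with $d_P(\varphi^{(n)}_j(z),w')\leq C_0$, and Lemma~\ref{lem_DYhyp} gives $d_P(g^j(z),\varphi^{(n)}_j(z))\leq C_0$; two triangle inequalities yield $d_P(g^j(z),w)\leq 3C_0$, which becomes $\leq C_0$ after the promised enlargement of the constant. The hard part is the range condition $y'<D_0$ in the first step, needed to apply Lemma~\ref{lem_DYhyp} to $z$; once this is extracted from the real estimates, the hyperbolic Denjoy-Yoccoz Lemma and the density corollary assemble cleanly into the argument.
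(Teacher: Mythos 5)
The paper itself gives no proof of this proposition: it is stated after a one-sentence heuristic ("Enlarging the constant $C_0$, we can construct a `transient annulus'\ldots") and then the text simply moves on to the bibliography. So there is nothing concrete in the paper to compare against, and your proposal must be judged on its own.

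Your overall strategy — apply the hyperbolic Denjoy--Yoccoz lemma to locate the actual orbit near a flow line, use the hypothesis of an interior iterate to deduce that this flow line is at a height comparable to $\gamma_n$, then invoke Corollary~\ref{cor_dense} and the triangle inequality — is the natural one and the final two paragraphs are fine modulo constants. The weak link is exactly where you flag it: the range condition $y'<D_0$. The justification ``the real non-linearity estimates of Section~2 multiply vertical scales by bounded factors per step'' does not deliver what you need. Those estimates give $\|D\log Dg\|<\eps_0$, i.e.\ a per-step bound of order $1+\eps_0$ on derivatives; compounded over $q_{n+1}$ steps this gives only $e^{q_{n+1}\eps_0}$, which is useless. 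The statement that the height parameter $y$ changes by only a bounded factor over $q_{n+1}$ steps is precisely the content of the Denjoy--Yoccoz lemma, not a corollary of the raw derivative bound, and the Denjoy--Yoccoz lemma (as stated) requires $|y_0|\leq D_0$ at the starting point — so you cannot use it forward from $z$ to prove the very hypothesis you need for it to apply.

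The repair is to start from the iterate you know to be in range. Let $j_0$ be an index with $g^{j_0}(z)$ between $\SS^1$ and $\gamma_n$; write $g^{j_0}(z)=x''+i|m_n(x'')|y''$ with $0<y''<y_0<D_0$. Apply the (hyperbolic) Denjoy--Yoccoz lemma to $g^{-1}$ at this point for $j_0\leq q_{n+1}$ backward steps (one must check that $g^{-1}$ satisfies the same hypotheses: it lies in $D^\omega(\TT,\Delta)$ by definition, $\|D\log Dg^{-1}\|_{C^0}$ is comparable to $\|D\log Dg\|_{C^0}$ because $Dg$ is uniformly close to $1$, and Lemma~\ref{lem_comb} gives the same combinatorics for backward iterates). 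This bounds the height parameter of $z=g^{-j_0}(g^{j_0}(z))$ by a fixed multiple of $y''$ and hence of $y_0$, and simultaneously puts the whole orbit $C_0$-close to the flow line through $g^{j_0}(z)$, whose height is comparable to that of $\gamma_n$ since $z$ sits above $\gamma_n$. From there your third paragraph goes through verbatim. As written, your appeal to "real non-linearity estimates" in place of this backward Denjoy--Yoccoz step is a genuine gap, not a cosmetic omission.
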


This property is used in the proof of Theorem 1.


\end{document}